\newtheorem{theorem}{Theorem}[section]
\newtheorem{lemma}[theorem]{Lemma}
\newtheorem*{thm*}{Theorem}
\newtheorem*{cor*}{Corollary}
\newtheorem{thm}{Theorem}
\newtheorem{cor}[thm]{Corollary}
\theoremstyle{definition}
\newtheorem{notation}[theorem]{Notation}
\newtheorem{remark}[theorem]{Remark}
\newtheorem{question}[theorem]{Question}
\newtheorem{recollection}[theorem]{Recollection}
\DeclareMathOperator*{\holim}{holim}
\begin{document}

\title{Real topological Hochschild homology and the Segal conjecture}
\author{Jeremy Hahn and Dylan Wilson}
\date{}


\setcounter{tocdepth}{1}
\maketitle
\begin{abstract} 
We give a new proof, independent of Lin's theorem, of the Segal conjecture for the cyclic group of order two.
The key input is a calculation, as a Hopf algebroid, of the Real topological Hochschild homology of $\mathbb{F}_2$.
This determines the $\mathrm{E}_2$-page of the descent spectral sequence for the map $\mathrm{N}\mathbb{F}_2 \to \mathbb{F}_2$, where $\mathrm{N}\mathbb{F}_2$ is the $C_2$-equivariant Hill--Hopkins--Ravenel norm of $\mathbb{F}_2$.  The $\mathrm{E}_2$-page represents a new upper bound on the $RO(C_2)$-graded homotopy of $\mathrm{N}\mathbb{F}_2$, from which the Segal conjecture is an immediate corollary.
\end{abstract}
\tableofcontents


\section{Introduction}

The Segal conjecture, for the cyclic group $C_2$ of order $2$, is an equivalence
$$\pi^0_{s}(\mathbb{RP}^{\infty}) \cong \widehat{A}(C_2)$$
of the stable cohomotopy of $\mathbb{RP}^{\infty}$ with the completion of the Burnside ring of $C_2$ at the augmentation ideal.  The conjecture follows from the following stronger result of Lin \cite{Lin}:

\begin{theorem}[Lin] \label{LinTate}
Let $\gamma$ denote the canonical line bundle over $\mathbb{RP}^{\infty}$, and for each integer $n>0$ let $\mathbb{RP}^{\infty}_{-n}$ denote the Thom spectrum of $-n \cdot \gamma$.  Then there is an equivalence of spectra
$$\mathbb{RP}^{\infty}_{-\infty} = \mathrm{holim}_{n} \mathbb{RP}^{\infty}_{-n}\simeq (S^{-1})^{\wedge}_2.$$
\end{theorem}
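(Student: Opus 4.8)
The plan is to run the mod-$2$ Adams spectral sequence for $\mathbb{RP}^{\infty}_{-\infty}$ and identify it with that of $S^{-1}$. Write $\mathbb{RP}^{\infty}_{-\infty} = \holim_k \mathbb{RP}^{\infty}_{-k}$, the tower maps $\mathbb{RP}^{\infty}_{-k}\to\mathbb{RP}^{\infty}_{-k+1}$ being the collapses of the bottom cell, and assemble the inverse limit of the corresponding Adams spectral sequences. Since $\mathbb{RP}^{\infty}_{-k}$ is bounded below of finite type, $\mathrm{Ext}^{s,t}_{\mathcal{A}}(H^*(\mathbb{RP}^{\infty}_{-k};\mathbb{F}_2),\mathbb{F}_2)$ is finite in every bidegree, so the relevant $\lim^1$-terms vanish and the limiting (conditionally convergent, with strong convergence checked via a Mittag--Leffler argument) spectral sequence has
$$E_2^{s,t} \;=\; \lim_k\, \mathrm{Ext}^{s,t}_{\mathcal{A}}\!\big(H^*(\mathbb{RP}^{\infty}_{-k};\mathbb{F}_2),\mathbb{F}_2\big) \;\cong\; \mathrm{Ext}^{s,t}_{\mathcal{A}}(R,\mathbb{F}_2), \qquad R := \colim_k H^*(\mathbb{RP}^{\infty}_{-k};\mathbb{F}_2),$$
abutting to $\pi_*(\mathbb{RP}^{\infty}_{-\infty})$.

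By the Thom isomorphism, $H^*(\mathbb{RP}^{\infty}_{-k};\mathbb{F}_2)$ has $\mathbb{F}_2$-basis $\{x^j : j \geq -k\}$ with $Sq^i(x^j)=\binom{j}{i}x^{j+i}$, so $R = \mathbb{F}_2[x,x^{-1}]$ with total square $Sq(x)=x+x^2$ — that is, $R$ is (up to regrading) the Singer construction of $\mathbb{F}_2$. An elementary but crucial observation is that $\{x^j : j\neq -1\}$ spans an $\mathcal{A}$-submodule of $R$: to hit $x^{-1}$ from a lower class one would need $\binom{-1-i}{i}\equiv\binom{2i}{i}\not\equiv 0 \pmod 2$ for some $i\geq 1$, which never happens since $v_2\binom{2i}{i}\geq 1$. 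Hence there is an $\mathcal{A}$-module surjection $q\colon R \twoheadrightarrow R/\langle x^j : j\neq -1\rangle = \Sigma^{-1}\mathbb{F}_2$, and $q$ is compatible with the bottom-cell collapses, so it is realized by a comparison map of spectra $f\colon S^{-1}\to \mathbb{RP}^{\infty}_{-\infty}$; the existence of $f$, i.e.\ the compatible lifting of the bottom cell of $\mathbb{RP}^{\infty}_{-1}$ through the whole tower, is a nontrivial but classical fact about the attaching maps of stunted projective spectra (equivalently, about $C_2$-equivariant James periodicity).

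The crux — and the step I expect to dominate the difficulty — is the \emph{algebraic Segal conjecture}: that $q^*\colon \mathrm{Ext}^{s,t}_{\mathcal{A}}(\Sigma^{-1}\mathbb{F}_2,\mathbb{F}_2)\to\mathrm{Ext}^{s,t}_{\mathcal{A}}(R,\mathbb{F}_2)$ is an isomorphism, i.e.\ that the Singer construction is, up to shift, ``$\mathrm{Ext}$-equivalent to $\mathbb{F}_2$''. This is a purely homological statement about the Steenrod algebra; its proof is the technical heart of Lin's work (the elementary abelian case was later streamlined by Adams--Gunawardena--Miller using the comodule structure of the Singer construction), and I would either cite it or attempt it by filtering $R$ by the submodules $\langle x^j : j \geq -m\rangle$ and controlling the subquotients via a vanishing-line estimate. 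Granting it, $f$ induces on $E_2$-pages exactly the isomorphism $q^*$; since the Adams spectral sequences of $S^{-1}$ and of $\mathbb{RP}^{\infty}_{-\infty}$ both converge and $f$ is an honest map of spectra, $f$ is then an isomorphism on $E_\infty$, hence a $2$-adic equivalence. Finally $\pi_*(\mathbb{RP}^{\infty}_{-\infty})$ is seen to be finitely generated over $\mathbb{Z}_2$, so $\mathbb{RP}^{\infty}_{-\infty}$ is already $2$-complete and $f$ exhibits $\mathbb{RP}^{\infty}_{-\infty}\simeq (S^{-1})^{\wedge}_2$. (An equivalent repackaging, nearer to the present paper: $\mathbb{RP}^{\infty}_{-k}\simeq (S^{-k\sigma})_{hC_2}$, so $\mathbb{RP}^{\infty}_{-\infty}$ is a desuspension of the $C_2$-Tate sphere $\mathbb{S}^{tC_2}$, and the theorem becomes the assertion that $\mathbb{S}\to\mathbb{S}^{tC_2}$ is a $2$-adic equivalence — still ultimately resting on the same Steenrod-algebra input, or, as in this paper, on a separate $\mathrm{THR}$ computation.)
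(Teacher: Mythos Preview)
Your outline is the classical Lin/Adams--Gunawardena--Miller argument and is correct as a sketch, but it is exactly the proof the paper sets out to \emph{replace}. You run the Adams spectral sequence for $\mathbb{RP}^{\infty}_{-\infty}$ and reduce everything to the ``algebraic Segal conjecture'' --- the $\mathrm{Ext}$-equivalence of the Singer construction $\mathbb{F}_2[x,x^{-1}]$ with $\Sigma^{-1}\mathbb{F}_2$ over the full Steenrod algebra $\mathcal{A}$ --- which you rightly flag as the dominant step and propose to cite rather than reprove. The paper takes the opposite route: it reformulates Lin's theorem as the Borel-completeness of $\mathrm{N}\mathbb{F}_2$ (via Lemma~\ref{lem:tate} and the Nikolaus--Scholze reduction from general bounded-below $X$ to $X=\mathbb{F}_2$), and proves \emph{that} via the descent spectral sequence for $\mathrm{N}\mathbb{F}_2 \to \mathbb{F}_2$. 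The $E_2$-page is then a completed $\mathrm{Ext}$ over the Hopf algebra $\mathbb{F}_2[x]$ with $|x|=1+\sigma$ (the $RO(C_2)$-graded homotopy of $\mathrm{THH}^{\sigma}(\mathbb{F}_2)$), not over $\mathcal{A}$, and the required vanishing (Corollary~\ref{cor:vanishing}) is an elementary $x$-adic computation. So your approach recovers the theorem but inherits the full complexity of $\mathcal{A}$ at the crucial step, while the paper trades that for a polynomial Hopf algebra at the cost of some equivariant machinery (norms, the Hopf-algebroid structure on $\mathrm{THH}^{\sigma}(\mathbb{F}_2)_{\star}$, and the Nikolaus--Scholze bootstrap back to $X=S^0$). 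Your closing parenthetical already signals awareness of this; the point is that the paper's contribution is precisely to make that alternative route self-contained and independent of the $\mathcal{A}$-module calculation you would cite.
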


The only known proof of Lin's theorem proceeds via calculation of a certain continuous Ext group
$$\widehat{\mathrm{Ext}}_{\mathcal{A}}\left(\mathrm{H}^*(\mathbb{RP}^{\infty}_{-\infty};\mathbb{F}_2),\mathbb{F}_2\right).$$
The calculation is elegant, and has been generalized through the development of the \emph{Singer construction} \cite{Singer, AGM, nielsen-rognes}.
However, the simplicity of Lin's proof is fundamentally limited by the complexity of the Steenrod algebra $\mathcal{A}$.
The goal of this paper is to provide a new, less computational proof of Lin's theorem.  We cannot avoid calculating a completed Ext group, but the Ext we calculate is over a polynomial coalgebra $\mathbb{F}_2[x]$ rather than the Steenrod algebra $\mathcal{A}$.  We trust the reader will agree that this reduces the complexity of the homological algebra.

\begin{remark}
Just as the Steenrod algebra $\mathcal{A}$ arises as (the dual of) the homology of $\mathbb{F}_2$, the polynomial coalgebra $\mathbb{F}_2[x]$ appears as the \emph{topological Hochschild homology} of $\mathbb{F}_2$.
\end{remark}

To explain our methods, we must review how the Segal conjecture has been both restated and generalized via the language of $C_2$-equivariant stable homotopy theory.

\begin{notation}
In the $C_2$-equivariant stable homotopy category, we use the notation $S=S^0$ to denote the unit object.  This is the $C_2$-equivariant sphere spectrum.  We use $S^{\sigma}$ to denote the $1$-point compactification of the sign representation.  Depending on context, we use $\mathbb{F}_2$ to denote either the field with $2$ elements or the non-equivariant Eilenberg--Maclane spectrum $\mathrm{H}\mathbb{F}_2$.
\end{notation}

\begin{recollection}
In the $C_2$-equivariant stable homotopy category, the morphism 
$$a: S^{-\sigma} \to S^0$$
is adjoint to the inclusion of fixed points into the sign representation $\sigma$.  The \emph{Borel completion} of a $C_2$-spectrum $X$ is the $a$-completion
\[
	X^{\wedge}_a :=
	\holim\left( \cdots\to X/a^n \to X/a^{n-1} \to
	\cdots \to X/a\right).
\]
One says that a $C_2$-spectrum $X$ is \emph{Borel complete} if the natural map $X \to X^{\wedge}_a$ is an equivalence.
\end{recollection}

\begin{theorem}[Lin's theorem, restated] \label{LinEquivariant}
The natural map $S \to S^{\wedge}_a$ is an equivalence after $2$-completion.
\end{theorem}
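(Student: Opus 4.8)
The plan is to deduce \Cref{LinEquivariant} from a computation in genuine $C_2$-equivariant homotopy theory, with the equivariant sphere traded for the norm $\mathrm{N}\mathbb{F}_2 = \mathrm{N}^{C_2}_e \mathrm{H}\mathbb{F}_2$. First I would reduce the statement to the assertion that the $a$-completion map $\mathrm{N}\mathbb{F}_2 \to (\mathrm{N}\mathbb{F}_2)^\wedge_a$ is an equivalence. Two facts make this reasonable. On the one hand, both $S \to S^\wedge_a$ and $\mathrm{N}\mathbb{F}_2 \to (\mathrm{N}\mathbb{F}_2)^\wedge_a$ are equivalences on underlying spectra, so (after $2$-completion) each is an equivalence precisely when it is on geometric fixed points, and $\Phi^{C_2}$ sends these two maps to the Tate diagonals $S \to S^{tC_2}$ and $\mathrm{H}\mathbb{F}_2 \to (\mathrm{H}\mathbb{F}_2 \wedge \mathrm{H}\mathbb{F}_2)^{tC_2}$ respectively. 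On the other hand, the class of bounded-below, finite-type spectra on which the Tate diagonal is an equivalence is closed under the operations needed to assemble $S^\wedge_2$ out of $\mathrm{H}\mathbb{F}_2$-modules (an equivariant Adams resolution), so the case of $\mathrm{H}\mathbb{F}_2$ forces the case of the $2$-complete sphere. I expect this reduction to be essentially formal, the one delicate point being the interplay between $a$-completion and $\mathrm{H}\underline{\mathbb{F}_2}$-nilpotent completion.

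The engine of the proof is a descent spectral sequence. Consider the cosimplicial $C_2$-$E_\infty$-ring $[n] \mapsto \mathbb{F}_2^{\wedge_{\mathrm{N}\mathbb{F}_2}(n+1)}$ built from the map $\mathrm{N}\mathbb{F}_2 \to \mathbb{F}_2$ of the introduction (here $\mathbb{F}_2 = \mathrm{H}\underline{\mathbb{F}_2}$). Its totalization is the $\mathrm{H}\underline{\mathbb{F}_2}$-nilpotent completion of $\mathrm{N}\mathbb{F}_2$; since $\Phi^{C_2}\mathrm{N}\mathbb{F}_2 \simeq \mathrm{H}\mathbb{F}_2$ and the underlying spectrum $\mathrm{H}\mathbb{F}_2 \wedge \mathrm{H}\mathbb{F}_2$ are both modules over $\mathrm{H}\mathbb{F}_2$, hence already complete, the isotropy-separation square identifies this totalization with $\mathrm{N}\mathbb{F}_2$ itself. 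Thus the associated spectral sequence converges to $\pi^{C_2}_\star \mathrm{N}\mathbb{F}_2$, and its $\mathrm{E}_2$-page is the $\mathrm{Ext}$ (equivalently $\mathrm{Cotor}$) over the Hopf algebroid $\big(\pi^{C_2}_\star \mathbb{F}_2,\ \pi^{C_2}_\star(\mathbb{F}_2 \wedge_{\mathrm{N}\mathbb{F}_2} \mathbb{F}_2)\big)$. But $\mathbb{F}_2 \wedge_{\mathrm{N}\mathbb{F}_2} \mathbb{F}_2$ is, by the usual (dihedral) bar description, exactly the Real topological Hochschild homology $\mathrm{THR}(\mathbb{F}_2)$, so the $\mathrm{E}_2$-page is governed by $\pi^{C_2}_\star \mathrm{THR}(\mathbb{F}_2)$ as a Hopf algebroid over $\pi^{C_2}_\star \mathbb{F}_2$.

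Computing that Hopf algebroid is the real work, and where I expect the main obstacle to lie. The underlying spectrum of $\mathrm{THR}(\mathbb{F}_2)$ is $\mathrm{THH}(\mathbb{F}_2)$, whose homotopy is, by Bökstedt, the polynomial coalgebra $\mathbb{F}_2[x]$ with $|x| = 2$; I would try to upgrade this to a full $RO(C_2)$-graded computation through a Bökstedt-type spectral sequence argument together with control of the isotropy-separation square — in particular, understanding $\mathrm{THR}(\mathbb{F}_2)$ as a module over $\mathrm{H}\underline{\mathbb{F}_2}$ in the \emph{negative cone}, not merely in the positive cone. The outcome to aim for is that, after factoring out the contribution of the already-understood base $\pi^{C_2}_\star \mathbb{F}_2$, the Hopf algebroid is cofree on the polynomial coalgebra $\mathbb{F}_2[x]$, so that the $\mathrm{E}_2$-page reduces to $\mathrm{Cotor}_{\mathbb{F}_2[x]}(\mathbb{F}_2, \mathbb{F}_2)$ — a polynomial algebra $\mathbb{F}_2[h_0, h_1, \dots]$ — induced up over $\pi^{C_2}_\star \mathbb{F}_2$. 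This is precisely the replacement, advertised in the introduction, of homological algebra over the Steenrod algebra $\mathcal{A}$ by homological algebra over $\mathbb{F}_2[x]$; the difficulty is not in naming the underlying graded groups, but in pinning down all of the coalgebra and comodule structure maps.

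Finally, granting this computation, the $\mathrm{E}_2$-page is so small and so sparsely supported that the spectral sequence can support no differentials and no nontrivial extensions, so it determines $\pi^{C_2}_\star \mathrm{N}\mathbb{F}_2$ outright; in particular it is a (sharp) upper bound. One then checks that $\mathrm{N}\mathbb{F}_2$, so described, is $a$-complete — equivalently, that the Tate diagonal $\mathrm{H}\mathbb{F}_2 \to (\mathrm{H}\mathbb{F}_2 \wedge \mathrm{H}\mathbb{F}_2)^{tC_2}$ is an equivalence, which can be read off the answer directly or confirmed against the homotopy-fixed-point and Tate spectral sequences for the swap action of $C_2$ on $\mathrm{H}\mathbb{F}_2 \wedge \mathrm{H}\mathbb{F}_2$. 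Hence $\mathrm{N}\mathbb{F}_2 \to (\mathrm{N}\mathbb{F}_2)^\wedge_a$ is an equivalence, and by the first step \Cref{LinEquivariant} follows. I expect this last step to be comparatively routine; all the content is in the calculation of $\pi^{C_2}_\star \mathrm{THR}(\mathbb{F}_2)$.
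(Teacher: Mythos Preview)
Your overall architecture matches the paper's: reduce to showing $\mathrm{N}\mathbb{F}_2$ is Borel complete via the Nikolaus--Scholze argument, then analyze the descent spectral sequence for $\mathrm{N}\mathbb{F}_2 \to \mathbb{F}_2$ through the Hopf algebroid structure on $\pi_\star^{C_2}\mathrm{THR}(\mathbb{F}_2)$.

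The genuine gap is in your expected outcome for that Hopf algebroid. You aim for the $E_2$-page to be ``$\mathrm{Cotor}_{\mathbb{F}_2[x]}(\mathbb{F}_2,\mathbb{F}_2)$ induced up over $\pi^{C_2}_\star \mathbb{F}_2$,'' i.e.\ for $(\mathbb{F}_2)_\star$ to carry the trivial $\mathbb{F}_2[x]$-coaction. This is false, and its failure is exactly what makes the argument work. The paper shows that while $\mathrm{THH}^{\sigma}(\mathbb{F}_2)_\star \cong (\mathbb{F}_2)_\star[x]$ as a left algebra, the right unit is nontrivial: $\eta_R(u) = u + a^2x$. (Unlike ordinary $\mathrm{THH}$, where $\eta_L \simeq \eta_R$, the two fixed points of $S^\sigma$ are not equivariantly homotopic, so Real Hochschild homology produces a genuine Hopf algebroid, not a Hopf algebra.) Were your description correct, the $E_2$-page would be $(\mathbb{F}_2)_\star \otimes \mathbb{F}_2[y_0, y_1, \ldots]$: each monomial $y_I$ would carry a full copy of the negative cone, whose elements are infinitely $a$-divisible, and no Borel-completeness could be read off. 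What actually happens is that the coaction $u \mapsto u + a^2x$ forces relations $a^{2^{r+1}}y_r = 0$ in $\mathrm{Ext}$, and this $a$-torsion is precisely the source of the vanishing for $p+q<0$ needed in the fracture-square argument. Two smaller corrections: the paper $a$-completes the cosimplicial object first, so the spectral sequence converges to $(\mathrm{N}\mathbb{F}_2)^\wedge_a$ and the $E_2$-page is the \emph{completed} $\mathrm{Ext}$ group $\lim_n \mathrm{Ext}_{\mathbb{F}_2[x]/x^{2^n}}(\mathbb{F}_2, \mathbb{F}_2[a,u^{\pm 1}])$; and the spectral sequence does not collapse or determine $\pi_\star$ outright---the paper extracts only the vanishing range as an upper bound, which is all that is required.
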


We will explain the equivalence of the two variants of Lin's theorem in Section \ref{sec:segal}.  In the above form, Lin's theorem has received a substantial generalization.

\begin{recollection}
For any ordinary spectrum $X$, the Hill--Hopkins--Ravenel norm $NX=N_e^{C_2} X$ is a $C_2$-equivariant refinement of the smash product $X \wedge X$, with $C_2$-action given by swapping the two copies of $X$ \cite[\S B.5]{hhr}.
\end{recollection}

A version of the following was first proved in \cite{jones-wegmann} (cf. \cite[Theorem 5.13]{nielsen-rognes}).  As we will recall in Section \ref{sec:segal}, the statement in full generality is a consequence of \cite[III.1.7]{nikolaus-scholze}.
\begin{theorem}[Segal conjecture, strong form] \label{SegalEquivariant}
Let $X$ denote any bounded below spectrum.  Then the natural map
$$\mathrm{N}X \to (\mathrm{N}X)^{\wedge}_a$$
is an equivalence after $2$-completion.
\end{theorem}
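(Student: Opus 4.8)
The plan is to deduce Theorem~\ref{SegalEquivariant} from its special case $X = S$, which is exactly Theorem~\ref{LinEquivariant}, by a d\'evissage over bounded below spectra; but first one rewrites the comparison map as the Tate diagonal. Since $a$ restricts to zero on underlying spectra, the $a$-adic tower is pro-constant underlying, so $\mathrm{N}X \to (\mathrm{N}X)^{\wedge}_a$ is an equivalence of underlying non-equivariant spectra. Because geometric fixed points $\Phi^{C_2}$ and the underlying spectrum jointly detect equivalences of $C_2$-spectra, this map is an equivalence after $2$-completion if and only if $\Phi^{C_2}$ of it is. Now $\Phi^{C_2}(\mathrm{N}X) \simeq X$, as geometric fixed points of the norm $\mathrm{N}=\mathrm{N}_e^{C_2}$ is the identity; and for any $C_2$-spectrum $Y$ one has $\Phi^{C_2}(Y^{\wedge}_a) \simeq (Y^e)^{tC_2}$, the Tate construction of the underlying spectrum --- for the $a$-complete spectrum $Y^{\wedge}_a$ the map from genuine to homotopy fixed points is an equivalence, so in the Tate square the parallel comparison $\Phi^{C_2} \to (-)^{tC_2}$ is an equivalence too. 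Thus the comparison map is identified with the Tate diagonal $\Delta_X \colon X \to (X \wedge X)^{tC_2}$, and Theorem~\ref{SegalEquivariant} becomes the assertion that $\Delta_X$ is an equivalence after $2$-completion for every bounded below $X$; for $X = S$ this is Theorem~\ref{LinEquivariant}.

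For the d\'evissage, let $\mathcal{C}$ be the class of bounded below spectra $X$ for which $\Delta_X$ is an equivalence after $2$-completion, so $S \in \mathcal{C}$ by Theorem~\ref{LinEquivariant}. I would establish two closure properties. \textbf{(i)} $\mathcal{C}$ has the two-out-of-three property for cofiber sequences: for $A \to B \to C$, the norm $\mathrm{N}C$ carries a two-step filtration with graded pieces $\mathrm{N}A$, $\mathrm{N}B$, and the induced $C_2$-spectrum $\mathrm{Ind}_e^{C_2}(A \wedge B)$ coming from the ``cross terms'' of $(B/A)^{\wedge 2}$; since the Tate construction vanishes on induced $C_2$-spectra, the functor $X \mapsto (X \wedge X)^{tC_2}$ carries $A \to B \to C$ to an honest cofiber sequence compatibly with $\Delta$, whence two-out-of-three by the five lemma. (In particular $S^n \in \mathcal{C}$ for all $n \in \mathbb{Z}$, via the sequences $S^{n-1} \to 0 \to S^n$.) \textbf{(ii)} $\mathcal{C}$ is closed under coproducts of uniformly bounded below spectra, because the functor $X \mapsto (X \wedge X)^{tC_2}$ is (homotopy orbits commute with coproducts; the contribution of a wedge to the relevant spectral sequences splits over the index set and converges thanks to the uniform connectivity; the cross terms are again induced, hence Tate-acyclic), compatibly with $\Delta$. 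Now any bounded below $X$ is the sequential colimit of its skeleta $X^{(n)}$, each $X^{(n)}$ is built by finitely many cofiber sequences out of bounded below wedges of spheres, and the colimit is the cofiber of a self-map of the coproduct $\bigvee_n X^{(n)}$; so (i) and (ii) force $X \in \mathcal{C}$.

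Granting Theorem~\ref{LinEquivariant}, the one nonformal input is property (i). The functor $\mathrm{N}$ is not exact, and the entire point of passing to the Tate diagonal is that, after Borel completing and applying $\Phi^{C_2}$ --- equivalently, after applying $(-)^{tC_2}$ to smash squares --- the off-diagonal contributions become induced $C_2$-spectra and are annihilated, so that $X \mapsto (X \wedge X)^{tC_2}$ behaves like an exact functor. Pinning down the filtration of $\mathrm{N}C$ and its compatibility with the Tate diagonal is the technical heart, and is the content of \cite[III.1.7]{nikolaus-scholze}; the convergence bookkeeping in (ii) I expect to be routine given the uniform connectivity.
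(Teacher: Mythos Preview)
Your reduction to the Tate diagonal is exactly the paper's Lemma~\ref{lem:tate}, so that part matches. The divergence comes immediately after: the paper does \emph{not} reduce to $X=S$ and invoke Theorem~\ref{LinEquivariant}. Instead it cites the Nikolaus--Scholze d\'evissage (which uses Postnikov towers, not cellular filtrations) to reduce to $X=\mathbb{F}_2$, and then supplies the $\mathbb{F}_2$ case via the new computation Corollary~\ref{cor:vanishing}. In the paper's logical structure, Theorem~\ref{LinEquivariant} is a \emph{corollary} of Theorem~\ref{SegalEquivariant} (see the final Remark of \S\ref{sec:segal}), not an input to it; indeed, the abstract advertises ``a new proof, independent of Lin's theorem.'' So taking Theorem~\ref{LinEquivariant} for granted makes your argument circular within this paper and bypasses its entire contribution.

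Two further comments on the d\'evissage itself. First, the reduction you attribute to \cite[III.1.7]{nikolaus-scholze} actually lands on $H\mathbb{F}_p$, not on $S$: their argument runs through Postnikov sections and resolutions of Eilenberg--MacLane spectra, precisely because this avoids the colimit issue you flag in (ii). Second, your closure property (ii) is more delicate than ``routine'': the Tate construction does not commute with infinite coproducts, and uniform connectivity of the $X_\alpha$ does not by itself give a connectivity bound on $(Y^{\wedge 2})^{tC_2}$ that would force convergence---that bound is essentially what the Segal conjecture provides. The Nikolaus--Scholze route sidesteps this by working down the Postnikov tower (a limit, where the relevant functors behave well) rather than up a skeletal filtration.
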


Theorem \ref{LinEquivariant} follows from Theorem \ref{SegalEquivariant} by setting $X$ to be the sphere spectrum.  As explained in \cite[III.1.7]{nikolaus-scholze}, Theorem \ref{SegalEquivariant} follows in general from the case $X=\mathbb{F}_2$.  In other words, since $\mathrm{N}\mathbb{F}_2$ is $2$-complete, all statements of Lin's theorem are consequences of the following result:

\begin{theorem} \label{thm:NF2Segal}
The $C_2$-spectrum $\mathrm{N}\mathbb{F}_2$ is Borel complete.
\end{theorem}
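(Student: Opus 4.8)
The plan is to study $\mathrm{N}\mathbb{F}_2$ by \emph{descent} along the canonical map of $C_2$-$E_\infty$-rings $c\colon \mathrm{N}\mathbb{F}_2 \to \mathbb{F}_2$, which is adjoint to the identity of $\mathrm{H}\mathbb{F}_2$ under the norm--restriction adjunction. Consider the associated Amitsur cosimplicial $C_2$-spectrum whose $n$-cosimplices are the $(n+1)$-fold relative smash power $\mathbb{F}_2 \wedge_{\mathrm{N}\mathbb{F}_2} \cdots \wedge_{\mathrm{N}\mathbb{F}_2} \mathbb{F}_2$; in cosimplicial degree $1$ this is exactly the Real topological Hochschild homology $\mathrm{THR}(\mathbb{F}_2) = \mathbb{F}_2 \wedge_{\mathrm{N}\mathbb{F}_2} \mathbb{F}_2$, and the higher terms are iterated relative smash products of it. The totalization of this cosimplicial object is the $\mathbb{F}_2$-nilpotent completion of $\mathrm{N}\mathbb{F}_2$, and since $\mathrm{N}\mathbb{F}_2$ is bounded below and (as recalled above) $2$-complete, I expect the usual Bousfield-type argument — carried out for genuine $C_2$-spectra and the constant Mackey functor $\underline{\mathbb{F}_2}$ — to identify this nilpotent completion with $\mathrm{N}\mathbb{F}_2$ itself. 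Granting this, the associated descent (Bousfield--Kan) spectral sequence converges to $\pi^{C_2}_\star \mathrm{N}\mathbb{F}_2$, with $E_2$-page the $RO(C_2)$-graded $\mathrm{Cotor}$ of the Hopf algebroid $\big(\pi_\star \mathbb{F}_2,\ \pi_\star \mathrm{THR}(\mathbb{F}_2)\big)$ with trivial coefficients. Restricted to the underlying category this Hopf algebroid collapses to the polynomial coalgebra $\pi_* \mathrm{THH}(\mathbb{F}_2) = \mathbb{F}_2[x]$ of the Remark, whose $\mathrm{Cotor}$ is a completely explicit polynomial algebra with an obvious vanishing line; the $RO(C_2)$-graded refinement should be bounded accordingly. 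This $E_2$-page is the promised new upper bound for $\pi^{C_2}_\star \mathrm{N}\mathbb{F}_2$.

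To deduce Borel completeness, I would run the same construction after Borel completion: applying $(-)^{\wedge}_a$ levelwise to the Amitsur complex yields a cosimplicial object whose totalization is $(\mathrm{N}\mathbb{F}_2)^{\wedge}_a$, and here the relevant tower is manifestly $a$-complete, so the resulting spectral sequence converges with no extra input. The map $c$ induces a morphism from the first spectral sequence to this one; on $E_2$-pages it is the map on $\mathrm{Cotor}$ induced by $\mathrm{THR}(\mathbb{F}_2) \to \mathrm{THR}(\mathbb{F}_2)^{\wedge}_a$. Borel completion changes $\pi_\star \mathrm{THR}(\mathbb{F}_2)$ only in its ``negative cone'' — the part supported on $\widetilde{EC_2}$, where the connective geometric fixed points $\mathbb{F}_2[t_0,t_1]$ get replaced by the $2$-periodic Tate construction — and, precisely because the $\mathrm{Cotor}$ input is polynomial rather than Steenrod-algebra-sized, this alteration is invisible in the $RO(C_2)$-degrees that compute $\pi^{C_2}_\star \mathrm{N}\mathbb{F}_2$. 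Thus the comparison is an isomorphism of $E_2$-pages between two convergent spectral sequences, so $\mathrm{N}\mathbb{F}_2 \to (\mathrm{N}\mathbb{F}_2)^{\wedge}_a$ is an isomorphism on $\pi^{C_2}_\star$; being automatically an equivalence on underlying spectra, it is then a $C_2$-equivalence, which is the assertion.

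Essentially all the difficulty sits in two places. The first — and the genuine obstacle — is the calculation of $\mathrm{THR}(\mathbb{F}_2)$ as a Hopf algebroid: producing the $RO(C_2)$-graded refinement of $\mathbb{F}_2[x]$ together with all of its structure maps (unit, counit, coproduct, and the interaction with the Euler class $a$, the orientation class, and geometric fixed points), and confirming that the refinement is tame enough that its $\mathrm{Cotor}$ remains tractable. I would attack this through the dihedral bar presentation of $\mathrm{THR}$ and the Hill--Hopkins--Ravenel norm, using an $a$-Bockstein or slice filtration to reduce to the already-understood underlying ($\mathbb{F}_2[x]$) and geometric-fixed-point ($\mathbb{F}_2[t_0,t_1]$) computations and to reassemble the coalgebra structure. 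The second is the convergence bookkeeping: checking that $\mathrm{N}\mathbb{F}_2$ is $\mathbb{F}_2$-nilpotently complete so that the descent spectral sequence converges strongly, and identifying precisely which negative-cone classes of the Borel-completed $E_2$-page could in principle contribute, then ruling them out — this is the step at which, over the coalgebra $\mathbb{F}_2[x]$, we pay the price that Lin paid over the full Steenrod algebra. Everything else — the existence of $c$, the identification of the Amitsur terms with iterated $\mathrm{THR}$, and the formal properties of $a$-completion and of Bousfield--Kan spectral sequences — I would treat as routine.
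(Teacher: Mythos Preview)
Your setup matches the paper exactly: descent along $c\colon \mathrm{N}\mathbb{F}_2 \to \mathbb{F}_2$, the identification of the Amitsur terms with iterated $\mathrm{THR}(\mathbb{F}_2)$, and the recognition that the Hopf algebroid structure on $\pi_\star\mathrm{THR}(\mathbb{F}_2)$ is the crucial input. The paper computes this structure essentially as you propose, by comparison with underlying and geometric fixed points; the answer is $(\mathbb{F}_2)_\star[x]$ with $x$ primitive in degree $\rho$ and $\eta_R(u)=u+a^2x$.

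The gap is in your mechanism for deducing Borel completeness. Your plan is to compare the uncompleted and $a$-completed descent spectral sequences and argue that the map on $E_2$-pages is an isomorphism in integer stems. But your description of what Borel completion does to $\pi_\star\mathbb{F}_2$ is incorrect: it does not merely kill the negative cone, it replaces $\mathbb{F}_2[a,u]\oplus(\text{NC})$ by $\mathbb{F}_2[a,u^{\pm 1}]$. In integer stems the negative cone already contributes nothing (every class there has positive $\sigma$-degree), so the actual difference between the two $E_1$-pages lies entirely in the new negative powers of $u$, which in each bidegree pair with arbitrarily high powers of $x$ to produce an infinite product. Your assertion that this is ``invisible because the coalgebra is polynomial'' is not an argument; it is the entire content of the theorem, and nothing in your proposal names a mechanism for establishing it.

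The paper avoids this comparison altogether. It works only with the $a$-completed spectral sequence and proves, via an explicit $x$-adic computation of the relevant $\mathrm{Ext}$ groups, that $\pi_{p+q\sigma}(\mathrm{N}\mathbb{F}_2)^{\wedge}_a$ vanishes for $p+q<0$ and $p\neq 0$, and equals $\mathbb{F}_2\{a^{-q}\}$ when $p=0$. The conclusion then goes through the arithmetic fracture square: it suffices to show $(\mathrm{N}\mathbb{F}_2)[a^{-1}] \to (\mathrm{N}\mathbb{F}_2)^{\wedge}_a[a^{-1}]$ is an equivalence, and since the source is already known to have homotopy $\mathbb{F}_2[a^{\pm 1}]$ (because $(\mathrm{N}\mathbb{F}_2)^{\Phi C_2}=\mathbb{F}_2$), the vanishing result computes the target to be the same. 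The uncompleted descent spectral sequence is never compared to anything; the role you wanted it to play is filled instead by the elementary identification of the geometric fixed points of the norm.
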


Theorem \ref{thm:NF2Segal} is the form in which we will prove the Segal conjecture.
It is important to note that, while Theorem \ref{thm:NF2Segal} tells us that the spectra
$\mathrm{N}\mathbb{F}_2$ and $(\mathrm{N}\mathbb{F}_2)^{\wedge}_a$
coincide, it does not shed light on the homotopy type
of either one.
As we now explain, our main theorem provides a computable upper
bound on the homotopy groups of these spectra, and in this sense our results are stronger than the Segal conjecture.

We prove the following Theorem and Corollary independently of the Segal conjecture:

\begin{thm} \label{thm:main}
Let $\mathbb{F}_2[x]$ be the Hopf algebra
with $x$ primitive of degree $1+\sigma$, and let
$\mathbb{F}_2[a,u]$ be the comodule algebra where the class
$a$ is primitive in degree $-\sigma$, $u$ is in degree $1-\sigma$,
and the coaction is determined
by: 
	\[
	u \mapsto u\otimes 1 + a^2 \otimes x.
	\]  Then there is a spectral sequence
	\[
  E_2 = \widehat{\mathrm{Ext}}^{s,k+\ell\sigma}_{\mathbb{F}_2[x]}(
  \mathbb{F}_2, \mathbb{F}_2[a,u^{\pm 1}])
	\Rightarrow
	\pi_{(k-s)+\ell\sigma}(\mathrm{N}\mathbb{F}_2)^{\wedge}_a.
	\]
Explicitly, the completed $\mathrm{Ext}$ appearing in this $E_2$-page may be calculated as 
\[E_2 = \lim_n\mathrm{Ext}^{s, k+\ell\sigma}_{\mathbb{F}_2[x]/x^{2^n}}
	(\mathbb{F}_2, \mathbb{F}_2[a,u^{\pm 1}]).\]
\end{thm}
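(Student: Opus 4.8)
The plan is to realize the spectral sequence as the Bousfield--Kan (descent) spectral sequence of the $\mathbb{F}_2$-based cobar resolution of $\mathrm{N}\mathbb{F}_2$ along the canonical $C_2$-ring map $\mathrm{N}\mathbb{F}_2 \to \mathbb{F}_2$ (the counit of the norm--restriction adjunction evaluated at $\mathrm{H}\underline{\mathbb{F}_2}$), and then to read off its $E_2$-page from the computation of Real topological Hochschild homology described above. The key point is that $\mathbb{F}_2 \wedge_{\mathrm{N}\mathbb{F}_2} \mathbb{F}_2 \simeq \mathrm{THR}(\mathbb{F}_2)$, so the relevant cosimplicial $C_2$-spectrum is assembled entirely out of Real $\mathrm{THH}$, and its $\pi_\star^{C_2}$ is controlled by the Hopf algebroid structure of $\mathrm{THR}(\mathbb{F}_2)$.

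First I would form the cosimplicial $C_2$-spectrum $C^\bullet$ with $C^n = \mathbb{F}_2^{\wedge_{\mathrm{N}\mathbb{F}_2}(n+1)}$ and use the identity $\mathbb{F}_2^{\wedge_{\mathrm{N}\mathbb{F}_2}(n+1)} \simeq \mathrm{THR}(\mathbb{F}_2)^{\wedge_{\mathbb{F}_2} n}$. Next I would $a$-complete and show $(\mathrm{N}\mathbb{F}_2)^{\wedge}_a \xrightarrow{\ \simeq\ } \mathrm{Tot}\big((C^\bullet)^{\wedge}_a\big)$: this is transparent on underlying spectra, where the resolution is the classical $\mathbb{F}_2$-Adams resolution of $\mathbb{F}_2 \wedge \mathbb{F}_2$ and converges because $\mathcal{A}_* = \pi_*(\mathbb{F}_2 \wedge \mathbb{F}_2)$ is connected, so the fibre of the augmentation has vanishing underlying spectrum, is therefore $\widetilde{EC_2}$-local, and dies after $a$-completion — equivalently, working in Borel $C_2$-spectra the forgetful functor to spectra is conservative and preserves totalizations, so the resolution is effective there. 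This gives a conditionally convergent spectral sequence $E_1^{s,\star} = \pi_\star^{C_2}(C^s)^{\wedge}_a \Rightarrow \pi_\star^{C_2}(\mathrm{N}\mathbb{F}_2)^{\wedge}_a$ with the standard Adams reindexing. For the $E_2$-page I would invoke the computation that, after $a$-completion, $\pi_\star^{C_2}\mathbb{F}_2^{\wedge}_a = \mathbb{F}_2[a, u^{\pm 1}]$ (Borel cohomology of $BC_2$, with $|a| = -\sigma$ and $|u| = 1-\sigma$) and $\pi_\star^{C_2}\mathrm{THR}(\mathbb{F}_2)^{\wedge}_a = \mathbb{F}_2[a, u^{\pm 1}][x]$ with $x$ primitive of degree $1+\sigma$, the two unit maps agreeing on $a$ and differing by $u \mapsto u + a^2 x$. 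Since $\mathbb{F}_2[a,u^{\pm1}][x]$ is free over $\mathbb{F}_2[a, u^{\pm 1}]$, a Künneth argument shows that applying $\pi_\star^{C_2}$ to $(C^\bullet)^{\wedge}_a$ yields the completed cobar complex of this Hopf algebroid, whose cohomology is $\widehat{\mathrm{Ext}}^{s,k+\ell\sigma}_{\mathbb{F}_2[x]}(\mathbb{F}_2, \mathbb{F}_2[a, u^{\pm 1}])$, with the coaction $u \mapsto u \otimes 1 + 1 \otimes a^2 x$ read off from the difference of the two units. Finally, the identification with $\lim_n \mathrm{Ext}_{\mathbb{F}_2[x]/x^{2^n}}(\mathbb{F}_2, \mathbb{F}_2[a, u^{\pm 1}])$ follows by filtering the cobar complex by powers of $x$, i.e. by writing $\mathbb{F}_2[x]$ as the inverse limit of its finite-dimensional Hopf algebra quotients $\mathbb{F}_2[x]/x^{2^n}$ and checking that the attendant Milnor exact sequence degenerates.

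The main obstacle is the bookkeeping forced by inverting $u$: because $u$ is a unit, $\pi_\star^{C_2}$ of each $C^n$ fails to be degreewise finite and the coaction on $\mathbb{F}_2[a, u^{\pm 1}]$ genuinely involves infinite sums (for instance $\psi(u^{-1}) = \sum_{j \geq 0} u^{-1-j} a^{2j} \otimes x^j$), so one must work systematically with $a$-completed relative smash products and completed comodules and verify that $a$-completion commutes with the totalization and with the Tor computations used to identify the cobar complex. The most delicate point is establishing \emph{strong} (rather than merely conditional) convergence of the spectral sequence to $\pi_\star^{C_2}(\mathrm{N}\mathbb{F}_2)^{\wedge}_a$; this is exactly where the finite approximations $\mathbb{F}_2[x]/x^{2^n}$ earn their keep, by presenting both the $E_2$-page and the abutment as limits of genuinely finite data.
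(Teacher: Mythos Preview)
Your approach is essentially the paper's: form the Amitsur/cobar resolution for $\mathrm{N}\mathbb{F}_2 \to \mathbb{F}_2$, $a$-complete, and read off the $E_2$-page from the Hopf algebroid structure on $\mathrm{THR}(\mathbb{F}_2)_\star$. Your convergence argument through the Borel category is a mild variant of the paper's (which instead uses the equivariant Postnikov tower and $2$-completeness of $\mathrm{N}\mathbb{F}_2$ to get convergence \emph{before} $a$-completing, then notes that $a$-completion preserves limits); both are fine. One small correction: the underlying of your cosimplicial object is the Amitsur complex for the map $\mathbb{F}_2\wedge\mathbb{F}_2\to\mathbb{F}_2$, not the absolute $\mathbb{F}_2$-Adams resolution of $\mathbb{F}_2\wedge\mathbb{F}_2$; convergence still holds since that map is a $\pi_0$-isomorphism of connective rings.

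The one point where the paper is sharper is exactly the obstacle you flag but do not resolve: tying the topological $a$-completion of the $E_1$-term to the algebraic $x$-adic filtration. The paper handles this in one line by observing that \emph{graded} completion of $\mathbb{F}_2[a,u^{\pm 1},x_1,\ldots,x_s]$ at $(a)$ equals completion at the degree-zero part of $(a)$, which is generated by the elements $a^2u^{-1}x_i$; since $u$ is a unit, this is the same as $(x_1,\ldots,x_s)$-adic completion. That gives $E_1 = \lim_n C^*_{\mathbb{F}_2[x]/x^{2^n}}(\mathbb{F}_2[a,u^{\pm 1}])$ directly, and degreewise finiteness kills the $\lim^1$ to yield the stated $E_2$. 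Your phrase ``filtering the cobar complex by powers of $x$'' is the right instinct, but without this identification of completions it does not yet connect to the $a$-completed spectral sequence you built; this is the missing link you should fill in. (The paper does not address, and does not need, strong versus conditional convergence for the theorem as stated.)
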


\begin{cor} \label{cor:vanishing}
Let $p$ and $q$ denote integers such that $p+q<0$.  Then
	\[
	\pi_{p+q\sigma}(\mathrm{N}\mathbb{F}_2)^{\wedge}_a
	=
	\begin{cases}
	0 & p\ne 0\\
	\mathbb{F}_2\{a^{-q}\} & p=0.
	\end{cases}
	\]
\end{cor}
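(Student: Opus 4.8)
The plan is to deduce the Corollary from the spectral sequence of Theorem~\ref{thm:main}; the only real content is the computation of the part of its $E_2$-page relevant to the region $p+q<0$. Since that spectral sequence converges to $\pi_{(k-s)+\ell\sigma}(\mathrm{N}\mathbb{F}_2)^\wedge_a$, the group $\pi_{p+q\sigma}$ is assembled from the terms $E_\infty^{s,(p+s)+q\sigma}$, and the hypothesis $p+q<0$ is exactly the inequality $k+\ell<s$ on the bidegree $(s,k+\ell\sigma)$. So I would first establish
\[
\widehat{\mathrm{Ext}}^{\,s,\,k+\ell\sigma}_{\mathbb{F}_2[x]}\big(\mathbb{F}_2,\mathbb{F}_2[a,u^{\pm 1}]\big)=0\qquad\text{whenever } s\ge 1\text{ and }k+\ell<s,
\]
and on the line $s=0$ identify $\widehat{\mathrm{Ext}}^{\,0}$ with the comodule primitives of $\mathbb{F}_2[a,u^{\pm 1}]$: over each $C_n:=\mathbb{F}_2[x]/x^{2^n}$ these form $\mathbb{F}_2[a,u^{\pm 2^n}]$ (a short check with binomial coefficients mod $2$), so their limit over $n$ is $\mathbb{F}_2[a]$, whose class $a^{-q}$ sits in degree $q\sigma$. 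Granting this, running the spectral sequence is painless: in the range $p+q<0$ the only surviving $E_2$-classes are the $a^{-q}$ on the zero-line; no differential can hit them; and a differential out of $a^{-q}$ would land in bidegree $(r,(r-1)+q\sigma)$, which again satisfies $k+\ell<s$ (because $q<0$) and hence vanishes. With $E_\infty$ concentrated on the zero-line throughout this region there are no extension problems, and $\pi_{p+q\sigma}(\mathrm{N}\mathbb{F}_2)^\wedge_a$ is $\mathbb{F}_2\{a^{-q}\}$ when $p=0$ and $0$ when $p\ne 0$.

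The vanishing statement I would prove over the finite-dimensional coalgebras $C_n$, using the second presentation of $E_2$ in Theorem~\ref{thm:main}, and it rests on two structural features of the comodule $M=\mathbb{F}_2[a,u^{\pm 1}]$. First, since $a$ is a comodule primitive and the coaction of $u$ agrees with $u\otimes 1$ modulo $a$, the quotient $M/aM\cong\mathbb{F}_2[u^{\pm 1}]$ carries the \emph{trivial} coaction, and multiplication by $a$ is an injective comodule endomorphism of $M$; in particular there is a short exact sequence of comodules $0\to\Sigma^{-\sigma}M\xrightarrow{\,a\,}M\to M/aM\to 0$. Second, after inverting $a$ the substitution $v=a^{-2}u$ turns the coaction into $v\mapsto v\otimes 1+1\otimes x$; over $C_n$ the power $v^{2^n}$ is then a comodule primitive and the span of $1,v,\dots,v^{2^n-1}$ is isomorphic, via $v^i\leftrightarrow x^i$, to $C_n$ regarded as a cofree comodule over itself. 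Hence $M[a^{-1}]$ is a cofree $C_n$-comodule, so $\mathrm{Ext}^{s}_{C_n}(\mathbb{F}_2,M[a^{-1}])=0$ for all $s\ge 1$.

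The computation itself is then a short $a$-Bockstein argument. Cobar homology commutes with the filtered colimit $M[a^{-1}]=\colim(M\xrightarrow{a}M\xrightarrow{a}\cdots)$, so $\mathrm{Ext}^{s}_{C_n}(\mathbb{F}_2,M)[a^{-1}]\cong\mathrm{Ext}^{s}_{C_n}(\mathbb{F}_2,M[a^{-1}])$, which vanishes for $s\ge 1$ by the second feature above; thus each $\mathrm{Ext}^{s}_{C_n}(\mathbb{F}_2,M)$ with $s\ge 1$ is $a$-power torsion. On the other hand, the long exact sequence attached to the short exact sequence above exhibits the kernel of multiplication by $a$ on $\mathrm{Ext}^{s}_{C_n}(\mathbb{F}_2,M)$ as a quotient of $\mathrm{Ext}^{s-1}_{C_n}(\mathbb{F}_2,M/aM)$; and since $M/aM$ is the trivial comodule $\mathbb{F}_2[u^{\pm 1}]$, its cobar cochains $u^{c}\otimes x^{i_1}\otimes\cdots\otimes x^{i_{s-1}}$ all lie in bidegree $(s-1,k+\ell\sigma)$ with $k+\ell=2\sum_t i_t\ge 2(s-1)$. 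Carrying this bound through the connecting homomorphism, which raises $k+\ell$ by one, shows that the kernel of $a$ on $\mathrm{Ext}^{s}_{C_n}(\mathbb{F}_2,M)$ is concentrated in bidegrees with $k+\ell\ge 2s-1$; equivalently, $a$ acts injectively on $\mathrm{Ext}^{s,k+\ell\sigma}_{C_n}(\mathbb{F}_2,M)$ whenever $k+\ell\le 2s-2$. An $a$-power-torsion group on which $a$ acts injectively must vanish, and since $s-1\le 2s-2$ for $s\ge 1$ this covers the entire range $k+\ell<s$; passing to the limit over $n$ gives the vanishing of $\widehat{\mathrm{Ext}}$.

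I expect the main obstacle to be the grading bookkeeping in the Bockstein step --- in particular, checking that the connecting homomorphism raises $k+\ell$ by exactly one, which is what produces the line $k+\ell\ge 2s-1$ --- together with confirming that the inverse limit over $n$ carries no $\lim^1$. The latter is unproblematic: in each fixed cohomological degree and internal degree the cobar complex of $M$ over $C_n$ is finite-dimensional, so the relevant $\mathrm{Ext}$ groups are finite and the tower in $n$ is automatically Mittag--Leffler.
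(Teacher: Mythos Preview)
Your argument is correct and takes a genuinely different route from the paper's. The paper proves the Corollary by first carrying out a full computation: Theorem~\ref{thm:effective-ext} runs the $x$-adic spectral sequence to determine the associated graded of $\mathrm{Ext}_{\mathbb{F}_2[x]/(x^{2^n})}(\mathbb{F}_2,\mathbb{F}_2[a,u])$ explicitly, and then one simply checks that every nonzero monomial in positive filtration has $p+q\ge 0$ (the $a$-torsion relations $a^{2^{r+1}}u^{2^{r+1}m}y_r=0$ are what prevent $a$-multiplication from pushing classes below this line). Your approach sidesteps any explicit Ext computation: the two structural observations that $M/aM$ is a trivial $C_n$-comodule and $M[a^{-1}]$ is cofree let you run an $a$-Bockstein argument showing that, in the range $k+\ell\le 2s-2$, the Ext groups are at once $a$-power torsion and $a$-injective, hence zero. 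For the Corollary alone your proof is cleaner and more conceptual---you never need to name a single Ext class---while the paper's explicit computation earns its keep elsewhere (the integer-stems chart in \S\ref{sec:epilogue} and the $\mathbb{F}_2[a]$-module extension statement both require knowing the Ext groups, not just their vanishing). It is worth noting that the vanishing line $k+\ell\ge 2s-1$ you extract is exactly the one implicit in the paper's computation, so neither argument is sharper than the other.
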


Corollary \ref{cor:vanishing} follows from straightforward computation of the $\mathrm{Ext}$ groups appearing in Theorem \ref{thm:main}.  As we will explain in Section \ref{sec:segal}, it immediately implies Theorem \ref{thm:NF2Segal} and hence Theorem \ref{SegalEquivariant}.

\begin{remark}
Our proof of Theorem \ref{thm:main} arises by considering the descent spectral sequence for the $C_2$-equivariant morphism
$$\mathrm{N}\mathbb{F}_2 \to \underline{\mathbb{F}_2},$$
where we use $\underline{\mathbb{F}_2}$ to denote the $C_2$-equivariant Eilenberg--Maclane spectrum of the constant Mackey functor $\mathrm{H}\underline{\mathbb{F}_2}$.
The basic descent datum is the $RO(C_2)$-graded homotopy of
$$\underline{\mathbb{F}_2} \otimes_{\mathrm{N}\mathbb{F}_2} \underline{\mathbb{F}_2},$$
which is known as the \emph{Real topological Hochschild homology} of $\underline{\mathbb{F}_2}$.
These $RO(C_2)$-graded homotopy groups were computed as an algebra in \cite{dmps}.
We will need to know them as a \emph{Hopf algebroid}, and not just as an algebra.
Our computation of the Hopf algebroid structure maps is likely of independent interest, and appears in Section \ref{sec:hoch-str}.
\end{remark}

\begin{remark} By the Segal conjecture, the fixed points spectrum
$\mathrm{N}\mathbb{F}_2^{C_2}$ is identified with the
more classical object $(\mathbb{F}_2 \wedge \mathbb{F}_2)^{hC_2}$.
There has been some interest in computing the homotopy groups of these fixed points,
and we give a brief discussion in Section \ref{sec:epilogue}.
\end{remark}

\subsubsection*{Outline}

The spectral sequence in the main theorem is
obtained by taking the $a$-completion of
the relative Adams spectral sequence
for the map $\mathrm{N}\mathbb{F}_2 \to \underline{\mathbb{F}_2}$.
The $E_2$-term of this spectral sequence is governed
by the Hopf algebroid structure on 
$\pi_{\star}(\underline{\mathbb{F}_2} \otimes_{\mathrm{N}\mathbb{F}_2}
\underline{\mathbb{F}_2})$, otherwise known as Real topological
Hochschild homology (cf. \cite{dmps}). We determine
this structure in \S\ref{sec:hoch-str} by comparison
with underlying and geometric fixed points.
In \S\ref{sec:E2-page} we identify the $E_2$-page
for the Borel completion with the indicated limit
of Ext groups. In \S\ref{sec:compute} we compute
these Ext groups, and extract a vanishing result
which implies the Segal conjecture for the group $C_2$.
We give the proof of the Segal conjecture in \S\ref{sec:segal}.
Finally, in \S\ref{sec:epilogue} we indicate a computation
of some low dimensional integer stems, and leave the
reader with a few questions of interest.

\subsubsection*{Conventions} We assume the reader
is acquainted with equivariant homotopy theory at the
level of \cite[\S2,\S3]{hhr}.

\subsubsection*{Acknowledgements}
We thank Danny Shi and Mingcong Zeng for discussions about their works in progress, and for their patience regarding our hasty and error-prone emails.
We thank Hood Chatham for help with some computer calculations we used to explore our $\mathrm{E}_2$-page, as well as for his spectral sequence package.
We thank J.D. Quigley and Tyler Lawson for sharing their unpublished work on the fixed points $(\mathrm{N}\mathbb{F}_2)^{C_2}$, as well as for providing detailed answers to questions.
We thank Mark Behrens for useful discussions, and for sharing a draft of his approach to the Segal conjecture via equivariant homotopy theory.
The first author was supported by the NSF under grant DMS-1803273, and the second author under grant
DMS-1902669.
\section{The Real Topological Hochschild Homology of \texorpdfstring{\underline{$\mathbb{F}_2$}}{F2}}\label{sec:hoch-str}

Recall from \cite{dmps} that, if $R \in \mathsf{CAlg}^{C_2}$ is
a $C_2$-commutative ring spectrum\footnote{More generally,
this definition makes sense if $R$ is an $\mathbb{E}_{\sigma}$-ring
in the sense of \cite[\S 2.2]{hill}.}, then the \emph{Real topological
Hochschild homology} of $R$ is the $C_2$-spectrum
	\[
	\mathrm{THH}^{\sigma}(R) := R \otimes_{\mathrm{N}R}R.
	\]
The underlying spectrum is the topological Hochschild homology
of $R$ (viewed as an ordinary ring spectrum). The geometric fixed points
are given by
	\[
	(\mathrm{THH}^{\sigma}(R))^{\Phi C_2} \simeq
	R^{\Phi C_2} \otimes_{R} R^{\Phi C_2}.
	\]

Dotto--Moi--Patchkoria--Reeh  computed the 
Real topological Hochschild homology
of $\mathbb{F}_2$ in \cite[Theorem 5.18]{dmps}:

\begin{theorem}
$\mathrm{THH}^{\sigma}(\underline{\mathbb{F}_2})$ is the free 
$\mathbb{E}_1$-$\underline{\mathbb{F}_2}$-algebra on a generator $x$
in degree $\rho$. In particular, there is an isomorphism
of $RO(C_2)$-graded rings:
	\[
	\mathrm{THH}^{\sigma}(\underline{\mathbb{F}_2})_{\star}
	\cong (\underline{\mathbb{F}_{2}})_{\star}[x], \quad |x|=\rho.
	\]
\end{theorem}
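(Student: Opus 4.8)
The plan is to realize $\mathrm{THH}^{\sigma}(\mathbb{F}_2)$ as a free $\mathbb{E}_1$-$\mathbb{F}_2$-algebra by producing a map out of the free algebra and then checking it is an equivalence; since the pair of functors $(i^{*},\Phi^{C_2})$ from $C_2$-spectra to non-equivariant spectra is jointly conservative, it suffices to check this on underlying spectra and on geometric fixed points. Let $F$ denote the free $\mathbb{E}_1$-$\mathbb{F}_2$-algebra on a class in degree $\rho$, so that $F\simeq\bigvee_{n\ge 0}S^{n\rho}\otimes\mathbb{F}_2$ and $\pi_{\star}F\cong(\mathbb{F}_2)_{\star}[x]$ with $|x|=\rho$. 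By the universal property of $F$ together with $\mathbb{F}_2$-linearity, a map of $\mathbb{E}_1$-$\mathbb{F}_2$-algebras $\phi\colon F\to\mathrm{THH}^{\sigma}(\mathbb{F}_2)$ is precisely a class $x\in\pi_{\rho}^{C_2}\mathrm{THH}^{\sigma}(\mathbb{F}_2)$.

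Granting a suitable $x$, the verification goes as follows. On underlying spectra $i^{*}\mathrm{THH}^{\sigma}(\mathbb{F}_2)\simeq\mathrm{THH}(\mathbb{F}_2)$, and Bökstedt's theorem gives $\pi_{*}\mathrm{THH}(\mathbb{F}_2)\cong\mathbb{F}_2[\mu]$ with $|\mu|=2$; since $i^{*}F$ is the free $\mathbb{E}_1$-$\mathbb{F}_2$-algebra on a degree-$2$ class, a map $i^{*}F\to\mathrm{THH}(\mathbb{F}_2)$ carrying the generator to $\mu$ is automatically a $\pi_{*}$-isomorphism, so it is enough to arrange $i^{*}x=\mu$. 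On geometric fixed points the formula recalled above gives $\mathrm{THH}^{\sigma}(\mathbb{F}_2)^{\Phi C_2}\simeq\mathbb{F}_2^{\Phi C_2}\otimes_{\mathbb{F}_2}\mathbb{F}_2^{\Phi C_2}$; using $\pi_{*}\mathbb{F}_2^{\Phi C_2}\cong\mathbb{F}_2[t]$ with $|t|=1$, which is free over $\mathbb{F}_2$, flat base change yields $\pi_{*}\mathrm{THH}^{\sigma}(\mathbb{F}_2)^{\Phi C_2}\cong\mathbb{F}_2[t_{\ell},t_{r}]$ with $t_{\ell},t_{r}$ the images of $t$ under the two units. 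As $\Phi^{C_2}$ is strong symmetric monoidal and colimit-preserving, and $\Phi^{C_2}(S^{n\rho}\otimes\mathbb{F}_2)\simeq S^{n}\otimes\mathbb{F}_2^{\Phi C_2}$, it carries $F$ to the free $\mathbb{E}_1$-$\mathbb{F}_2^{\Phi C_2}$-algebra on a degree-$1$ class. Hence, once one knows $\Phi^{C_2}x\notin\mathbb{F}_2\cdot t_{\ell}$ (so that $\{t_{\ell},\Phi^{C_2}x\}$ is a polynomial generating set), $\Phi^{C_2}\phi$ is a $\pi_{*}$-isomorphism and $\phi$ is an equivalence.

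Everything therefore reduces to constructing $x\in\pi_{\rho}^{C_2}\mathrm{THH}^{\sigma}(\mathbb{F}_2)$ and computing $i^{*}x$ and $\Phi^{C_2}x$; this is the heart of the argument and the main obstacle. I would organize it around the cofiber sequence of $C_2$-spectra
\[
\mathbb{F}_2\xrightarrow{\ \mathrm{unit}\ }\mathrm{THH}^{\sigma}(\mathbb{F}_2)\longrightarrow\Sigma\bigl(\mathbb{F}_2\otimes_{N\mathbb{F}_2}J\bigr),\qquad J=\mathrm{fib}\bigl(N\mathbb{F}_2\to\mathbb{F}_2\bigr).
\]
Using that $\pi_{\star}^{C_2}\mathbb{F}_2=\pi_{\star}^{C_2}H\underline{\mathbb{F}_2}$ vanishes in degrees $\sigma$ and $1+\sigma$, its long exact sequence identifies $\pi_{\rho}^{C_2}\mathrm{THH}^{\sigma}(\mathbb{F}_2)$ with $\pi_{\sigma}^{C_2}\bigl(\mathbb{F}_2\otimes_{N\mathbb{F}_2}J\bigr)$ compatibly with $i^{*}$ and $\Phi^{C_2}$; the analogous sequences on underlying spectra and on geometric fixed points compute these target groups as $\mathbb{F}_2\{\mu\}$ and $\mathbb{F}_2\{t_{r}\}$ respectively, so what one needs is the class of $\pi_{\sigma}^{C_2}(\mathbb{F}_2\otimes_{N\mathbb{F}_2}J)$ restricting to the transgression of $\xi_1\in\pi_1^{e}(\mathbb{F}_2\wedge\mathbb{F}_2)$ and with geometric fixed points the transgression of $t$ in the right-hand factor. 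Producing such a class amounts to a genuine computation of enough of the $RO(C_2)$-graded homotopy Mackey functor of $N\mathbb{F}_2=N_{e}^{C_2}H\mathbb{F}_2$ --- in particular, exhibiting an equivariant lift $\bar\xi_1\in\pi_{\sigma}^{C_2}N\mathbb{F}_2$ of $\xi_1$ and showing that the boundary map detecting it on geometric fixed points is nonzero (equivalently, that the relevant isotropy-separation connecting map vanishes). I would carry this out by identifying the $E_2$-page of a Real Bökstedt spectral sequence for $\mathrm{THH}^{\sigma}(\mathbb{F}_2)=\mathbb{F}_2\otimes_{N\mathbb{F}_2}\mathbb{F}_2$, which visibly contains a degree-$\rho$ permanent cycle in the role of $x$, using the already-computed underlying and geometric-fixed-point answers to force all differentials to vanish, and --- exactly as in Bökstedt's original computation of $\mathrm{THH}(\mathbb{F}_p)$ --- resolving the hidden multiplicative extensions that promote the exterior-type $E_{\infty}$-page to the polynomial algebra $(\mathbb{F}_2)_{\star}[x]$; this last step is where the remaining work lies.
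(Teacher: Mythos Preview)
The paper does not prove this theorem. It is quoted verbatim as \cite[Theorem 5.18]{dmps} of Dotto--Moi--Patchkoria--Reeh, and no argument is given here; the paper's own contribution begins with the Hopf algebroid structure in Theorem~\ref{thm:hopf-structure}. So there is no in-paper proof to compare your proposal against.

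As for the soundness of your outline: the skeleton is correct and is, in broad strokes, the shape of the argument in \cite{dmps}---construct a map out of the free $\mathbb{E}_1$-$\mathbb{F}_2$-algebra on $S^{\rho}$ and check it on underlying spectra (B\"okstedt) and on geometric fixed points (the formula $(\mathrm{THH}^{\sigma}R)^{\Phi C_2}\simeq R^{\Phi C_2}\otimes_R R^{\Phi C_2}$). But, as you yourself flag, the entire content of the theorem is the construction of the class $x\in\pi_{\rho}^{C_2}\mathrm{THH}^{\sigma}(\mathbb{F}_2)$ with the correct restrictions, and you have not actually produced it. Your first suggestion, extracting $x$ from the cofiber sequence by computing enough of $\pi_{\star}^{C_2}\mathrm{N}\mathbb{F}_2$, is circular: that computation is strictly harder than the goal (indeed the rest of the paper is devoted to bounding it). Your second suggestion, running a Real B\"okstedt spectral sequence and resolving the multiplicative extensions ``exactly as in B\"okstedt's original computation,'' is the right direction, but the phrase hides the real work: in the equivariant setting one must supply the input (a Real analog of the Hopkins--Mahowald theorem, or an explicit equivariant cell structure) that forces the polynomial, rather than exterior, answer. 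That step is precisely what \cite{dmps} carry out, and your proposal stops short of it.
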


If $\mathrm{THH}^{\sigma}(R)_{\star}$ is flat over $R_{\star}$ then the
pair $(R_{\star}, \mathrm{THH}^{\sigma}(R)_{\star})$ forms a Hopf algebroid
in the usual way, since we may identify it with the Hopf algebroid associated
to the relative Adams spectral sequence for the map
	\[
	\mathrm{N}R \to R.
	\]
	
\begin{remark} In the classical setting, the left and right units
for $\mathrm{THH}(R)$ are always homotopic, and, when the
relevant flatness hypothesis is satisfied, the associated Hopf algebroid
is always a Hopf algebra. This is no longer true for real Hochschild homology,
as we will see below. The reason is that the inclusions of the two different
fixed points into $S^{\sigma}$ are not \emph{equivariantly} homotopic.
\end{remark}

Before stating the structure theorem, we recall that the homotopy
groups of $\underline{\mathbb{F}_2}$ are given by
	\[
	\pi_{\star}\underline{\mathbb{F}_2} =
	\mathbb{F}_2[a, u] \oplus \frac{\mathbb{F}_2(a,u)}{\mathbb{F}_2[a,u]}
	\{\theta\}
	\]
where:
	\begin{itemize}
	\item $u: S^{1-\sigma} \to \underline{\mathbb{F}_2}$ is the unique homotopy class
	extending
	the underlying unit $C_{2+} \to \underline{\mathbb{F}_2}$ along the map
	$C_{2+} \to S^{1-\sigma}$.
	\item $\theta: S^{2\sigma} \to S^2$ is the degree 2 cover.
	\end{itemize}

\begin{theorem}\label{thm:hopf-structure} The Hopf algebroid structure
on $((\underline{\mathbb{F}_2})_{\star}, \mathrm{THH}^{\sigma}(\underline{\mathbb{F}_2})_{\star})$
is given as follows:
	\begin{itemize}
	\item The right units on generators are:
		\begin{align*}
		\eta_R(a) &= a,\\
		\eta_R(\theta) &= \theta,\\
		\eta_R(u) &= u + a^2x,\\
		\eta_R(\theta a^{-i}u^{-j}) &= (u+a^2x)^{-j}\frac{\theta}{a^i}.
		\end{align*}
	(Note that the apparently infinite sum in the last formula is
	finite because $\frac{\theta}{a^i}$ is $a$-torsion.)
	\item The comultiplication is determined by
		\[
		\Delta(x) = x\otimes 1 + 1 \otimes x.
		\]
	\end{itemize}
\end{theorem}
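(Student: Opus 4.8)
The plan is to pin down each structure map by comparing $\mathrm{THH}^\sigma(\mathbb{F}_2)$ against its underlying spectrum and its geometric fixed points, using that both the left unit map $\eta_L\colon (\mathbb{F}_2)_\star \to \mathrm{THH}^\sigma(\mathbb{F}_2)_\star$ and the right unit $\eta_R$ are ring maps, that $\eta_R$ is a section up to the augmentation $\varepsilon$, and that on underlying and geometric fixed points the Hopf algebroid structure is already understood. Since $\mathrm{THH}^\sigma(\mathbb{F}_2)_\star \cong (\mathbb{F}_2)_\star[x]$ is free, hence flat, over $(\mathbb{F}_2)_\star$, the Hopf algebroid is well-defined and I only need to determine the values of $\eta_R$ on the algebra generators $a$, $u$, $\theta a^{-i}u^{-j}$ of $\pi_\star \mathbb{F}_2$, together with $\Delta(x)$.

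First, the class $a\colon S^{-\sigma}\to S^0$ comes from the sphere, so it maps to itself under any map of $C_2$-$\mathbb{E}_\infty$-rings; hence $\eta_R(a)=a$, and likewise $\theta\colon S^{2\sigma}\to S^2$ is a sphere-level class, giving $\eta_R(\theta)=\theta$. For the comultiplication, $x$ has degree $\rho=1+\sigma$, and $\mathrm{THH}^\sigma(\mathbb{F}_2) \otimes_{\mathbb{F}_2} \mathrm{THH}^\sigma(\mathbb{F}_2)$ in that degree is spanned by $x\otimes 1$, $1\otimes x$, and the $(\mathbb{F}_2)_\star$-multiples thereof in lower filtration; comparison with the underlying (classical) $\mathrm{THH}(\mathbb{F}_2)=\mathbb{F}_2[x_1]$, where $x_1$ is primitive, forces $\Delta(x)=x\otimes 1 + 1\otimes x + (\text{possible } (\mathbb{F}_2)_\star\text{-correction in the kernel of the underlying map})$, and the degree reasons plus primitivity of $a$ rule out any correction term, so $\Delta(x)=x\otimes 1+1\otimes x$ and in particular $x$ is primitive.

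The substantive computation is $\eta_R(u)$. The class $u$ lives in degree $1-\sigma$, and $\mathrm{THH}^\sigma(\mathbb{F}_2)_{1-\sigma}$ is the degree-$(1-\sigma)$ part of $(\mathbb{F}_2)_\star[x]$; since $|x|=1+\sigma$, the only monomials in $x$ that can contribute beyond $u$ itself are $a^2 x$ (degree $-2\sigma + 1+\sigma = 1-\sigma$) and possibly $\theta$-divided classes, which are excluded because $\eta_R(u)$ must restrict to $u$ on underlying homotopy (where $u$ becomes the unit on $C_{2+}$) and must be compatible with the geometric fixed points calculation $(\mathrm{THH}^\sigma(\mathbb{F}_2))^{\Phi C_2}\simeq \mathbb{F}_2\otimes_{\mathbb{F}_2}\mathbb{F}_2 = \mathbb{F}_2$. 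So $\eta_R(u)=u+\lambda a^2 x$ for some $\lambda\in\mathbb{F}_2$, and the claim is $\lambda=1$; this is the key nontrivial point, and I expect it to be the main obstacle. I would establish $\lambda\neq 0$ by showing that $\eta_R$ cannot be the trivial (Hopf-algebra) right unit — equivalently, that the two inclusions of fixed points into $S^\sigma$ are not equivariantly homotopic, as flagged in the remark before the theorem — which can be detected by mapping to a suitable test object (for instance, computing in $\pi_\star$ of $\mathbb{F}_2\otimes_{\mathrm{N}\mathbb{F}_2}\mathbb{F}_2$ directly via the cofiber sequence $C_{2+}\to S^0\to S^\sigma$ relating $u$, $a$, and the underlying unit), or alternatively by a Tate/geometric-fixed-points bookkeeping argument that the class $a^2x$ is forced to appear.

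Finally, the formula $\eta_R(\theta a^{-i}u^{-j})$ follows formally: $\eta_R$ is a ring map and on the localized piece $\frac{\mathbb{F}_2(a,u)}{\mathbb{F}_2[a,u]}\{\theta\}$ we may write $\theta a^{-i}u^{-j}$ and apply $\eta_R(a)=a$, $\eta_R(u)=u+a^2x$, $\eta_R(\theta)=\theta$, expanding $(u+a^2x)^{-j}$ as a (finite, since $\theta/a^i$ is $a$-torsion and $x$ carries an $a$-divisible companion) geometric series. This finiteness is exactly the parenthetical remark in the statement and needs only the observation that high powers of $a$ kill $\theta/a^i$. Assembling these computations, together with the standard identities that the counit sends $x\mapsto 0$ and that $\eta_L$ is the evident inclusion, yields the full Hopf algebroid structure as stated.
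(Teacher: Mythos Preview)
Your outline agrees with the paper on the easy parts: $a$ and $\theta$ come from the sphere and are primitive, and $\Delta(x)=x\otimes 1+1\otimes x$ holds by a pure degree count (the paper does not even appeal to the underlying $\mathrm{THH}$). But there are two genuine gaps.

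\medskip
\noindent\textbf{The value of $\lambda$ in $\eta_R(u)=u+\lambda a^2x$.} You correctly isolate this as the key point, but your proposed detection is both vague and based on a wrong computation. You write $(\mathrm{THH}^{\sigma}(\mathbb{F}_2))^{\Phi C_2}\simeq \mathbb{F}_2\otimes_{\mathbb{F}_2}\mathbb{F}_2=\mathbb{F}_2$; this is false. The geometric fixed points of the constant Mackey functor $\mathbb{F}_2$ are $\mathbb{F}_2[t]$ with $t=u/a$, while those of $\mathrm{N}\mathbb{F}_2$ are $\mathbb{F}_2$, so
\[
(\mathrm{THH}^{\sigma}(\mathbb{F}_2))^{\Phi C_2}
\simeq \mathbb{F}_2[t]\otimes_{\mathbb{F}_2}\mathbb{F}_2[t].
\]
The paper uses exactly this: on geometric fixed points the map $\mathrm{N}\mathbb{F}_2\to\mathbb{F}_2$ becomes $\mathbb{F}_2\to\mathbb{F}_2[t]$, whose descent Hopf algebroid has $\eta_L(t)=1\otimes t$ and $\eta_R(t)=t\otimes 1$. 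These differ, so $\eta_L\ne\eta_R$ after applying $\Phi^{C_2}$, which forces $\lambda\ne 0$. Your alternative suggestions (cofiber sequences, ``Tate bookkeeping'') are not developed enough to stand as a proof, and your incorrect identification of $\Phi^{C_2}$ would, if used, destroy precisely the information needed.

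\medskip
\noindent\textbf{The right unit on the negative cone.} You say $\eta_R(\theta a^{-i}u^{-j})$ ``follows formally'' because $\eta_R$ is a ring map. It does not: the elements $\theta a^{-i}u^{-j}$ are not products of $\theta$, $a^{-1}$, $u^{-1}$ in $\pi_\star\mathbb{F}_2$, since $a$ and $u$ are not invertible there. The paper treats this carefully via the arithmetic fracture square: the connecting map $\partial\colon \Sigma^{-1}X^{\wedge}_a[a^{-1}]\to X$ is a module map, and in $(\mathbb{F}_2)^{\wedge}_a[a^{-1}]$ the element $u$ \emph{is} invertible, so one computes $\eta_R(a^{-i-1}u^{-j-1})=a^{-i-1}(u+a^2x)^{-j-1}$ there and then applies $\partial$, using $\partial(a^{-i-1}u^{-j-1})=\theta a^{-i}u^{-j}$. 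Your finiteness remark about $a$-torsion is correct and is needed to interpret the answer, but it does not by itself justify the formula.
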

\begin{proof} Since $\mathrm{N}S^0 = S^0$ the Hopf algebroid
structure on $\mathrm{THH}^{\sigma}(S^0)$ is trivial. Both $\theta$
and $a$ lie in the Hurewicz image of $S^0 \to \underline{\mathbb{F}_2}$, so
we conclude that they are primitive as indicated.

The element $x$ is primitive for degree reasons: the only
elements in $\pi_{\rho}(\mathrm{THH}^{\sigma}(\underline{\mathbb{F}_2})
\otimes_{\underline{\mathbb{F}_2}} \mathrm{THH}^{\sigma}(\underline{\mathbb{F}_2}))$
are $x \otimes 1$ and $1 \otimes x$.

Now we compute the right unit on $u$. 
Observe that, as a vector space,
$\pi_{1-\sigma}\underline{\mathbb{F}_2}[x] = \mathbb{F}_2\{u, a^2x\}$ so we must have
$\eta_R(u) = \alpha u + \beta a^2x$ for some numbers
$\alpha$ and $\beta$ in $\mathbb{F}_2$. On underlying homotopy we have
$\mathrm{res}(u) = 1$ so that $\alpha = 1$. Now observe that the map
	\[
	\mathrm{N}\mathbb{F}_2 \to \underline{\mathbb{F}_2}
	\]
becomes, upon taking geometric fixed points, the map
	\[
	\mathbb{F}_2 \to \mathbb{F}_2[t]
	\]
where $t = u/a$.

The descent Hopf algebroid for this map has
left and right units $\eta_L, \eta_R: \mathbb{F}_2[t] \to 
\mathbb{F}_2[t] \otimes_{\mathbb{F}_2} \mathbb{F}_2[t]$ given by
$t \mapsto 1\otimes t$ and $t\mapsto t \otimes 1$.
In particular, $\eta_L - \eta_R$ is nonzero on geometric fixed points,
so $\beta$ must be nonzero, completing the proof of the claim.

It remains to compute the right unit on elements of the form
$\theta a^{-i}u^{-j}$. For any $C_2$-spectrum $X$
we have a natural transformation $\partial: \Sigma^{-1}X^{\wedge}_a[a^{-1}] \to X$ arising as the
connecting map in the following arithmetic pullback square:
	\[
	\xymatrix{
	X\ar[r]\ar[d] & X[a^{-1}]\ar[d]\\
	X^{\wedge}_a \ar[r] & X^{\wedge}_a[a^{-1}]
	}
	\]
Moreover, if $X$ is a homotopy
ring, then $\partial$ is a map of $X$-modules.

In particular, we have a commutative diagram:
	\[
	\xymatrix{
	\Sigma^{-1}(\underline{\mathbb{F}_2})^{\wedge}_a[a^{-1}] \ar[r]^-{\eta_R}\ar[d]_{\partial}
	& \Sigma^{-1}(\underline{\mathbb{F}_2} \otimes_{\mathrm{N}\mathbb{F}_2}
	\underline{\mathbb{F}_2})^{\wedge}_a[a^{-1}]\ar[d]^{\partial}\\
	\underline{\mathbb{F}_2} \ar[r]_-{\eta_R} & \underline{\mathbb{F}_2}
	\otimes_{\mathrm{N}\mathbb{F}_2} \underline{\mathbb{F}_2}
	}
	\]
The construction $X \mapsto X^{\wedge}_a[a^{-1}]$
is lax symmetric monoidal, so the map
	\[
	(\underline{\mathbb{F}_2})^{\wedge}_a[a^{-1}] \to \left(\underline{\mathbb{F}_2}
	\otimes_{\mathrm{N}\mathbb{F}_2}\underline{\mathbb{F}_2}\right)^{\wedge}_a[a^{-1}]
	\]
is still a ring map and hence
	\[
	\eta_R(a^{-i-1}u^{-j-1}) = a^{-i-1}\eta_R(u)^{-j-1} 
	= a^{-i-1}(u+a^2x)^{-j-1}.
	\]
On the other hand, $\partial(a^{-i-1}u^{-j-1}) = \theta a^{-i}u^{-j}$
and the result follows.
\end{proof}

\section{The Construction of the Spectral Sequence} \label{sec:E2-page}

\begin{theorem} There is a spectral sequence
	\[
	E_2 = \lim_n\mathrm{Ext}^{s, k+\ell\sigma}_{\mathbb{F}_2[x]/x^{2^n}}
	(\mathbb{F}_2[a,u^{\pm 1}]) \Rightarrow
	\pi_{(k-s)+\ell\sigma}(\mathrm{N}\mathbb{F}_2)^{\wedge}_a.
	\]
\end{theorem}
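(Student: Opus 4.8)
The plan is to realize the spectral sequence as the $a$-completion of the relative Adams (descent) spectral sequence for the morphism of $C_2$-$\mathbb{E}_\infty$-rings $\mathrm{N}\mathbb{F}_2 \to \mathbb{F}_2$. First I would form the Amitsur cosimplicial object $\mathbb{F}_2^{\otimes_{\mathrm{N}\mathbb{F}_2}(\bullet+1)}$; its totalization is the relative $\mathbb{F}_2$-nilpotent completion $(\mathrm{N}\mathbb{F}_2)^{\wedge}_{\mathbb{F}_2}$, and because $\mathrm{THH}^{\sigma}(\mathbb{F}_2)_{\star} = (\mathbb{F}_2)_{\star}[x]$ is free, hence flat, over $(\mathbb{F}_2)_{\star}$, the associated spectral sequence has $E_2$-page the cohomology of the Hopf algebroid $((\mathbb{F}_2)_{\star}, (\mathbb{F}_2)_{\star}[x])$ with structure maps as in Theorem \ref{thm:hopf-structure}. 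To see this already converges to $\pi_{\star}\mathrm{N}\mathbb{F}_2$, I would note that the fiber $F$ of $\mathrm{N}\mathbb{F}_2 \to (\mathrm{N}\mathbb{F}_2)^{\wedge}_{\mathbb{F}_2}$ is bounded below, $2$-complete, and $\underline{\mathbb{F}_2}$-acyclic; applying $\mathrm{res}$ and $\Phi^{C_2}$ (where the relevant Amitsur complex is that of the split unit $\mathbb{F}_2 \to \mathbb{F}_2[t]$ of \S\ref{sec:hoch-str}) exhibits $\mathrm{res}\,F$ and $\Phi^{C_2}F$ as bounded-below, $2$-complete, $\mathbb{F}_2$-acyclic spectra, hence zero, so $F = 0$.

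Next I would $a$-complete the relative Adams tower. Since $(-)^{\wedge}_a = \lim_n(-)/a^n$ is a countable limit it commutes with the totalization, so the $a$-completed tower converges to $\pi_{\star}\big((\mathrm{N}\mathbb{F}_2)^{\wedge}_a\big)$; the convergence is strong because, in each fixed $RO(C_2)$-degree, the cobar complex is concentrated in finitely many cohomological filtrations ($x$ has degree $\rho$, of positive underlying dimension) and $a$-completion preserves this, so the associated filtration is complete, exhaustive and Hausdorff with vanishing $RE_\infty$.

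It remains to identify the $a$-completed $E_2$-page. Two inputs are needed. The first is a direct computation, via the $a$-Bockstein or the arithmetic fracture square $\mathbb{F}_2 \to \mathbb{F}_2[a^{-1}]$, $\mathbb{F}_2 \to (\mathbb{F}_2)^{\wedge}_a$ already used in \S\ref{sec:hoch-str}: $a$-completion inverts $u$, in the sense that $\pi_{\star}(\mathbb{F}_2)^{\wedge}_a = \mathbb{F}_2[a,u^{\pm1}]$ and $\pi_{\star}\mathrm{THH}^{\sigma}(\mathbb{F}_2)^{\wedge}_a = \mathbb{F}_2[a,u^{\pm1}][x]$ ($a$-adically completed) --- concretely, the $a$-power-torsion, $a$-divisible negative cone of $\pi_{\star}\mathbb{F}_2$, once completed, supplies precisely the negative powers of $u$. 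Hence the $a$-completed $E_2$-page is the (completed) cohomology of the Hopf algebroid $(\mathbb{F}_2[a,u^{\pm1}], \mathbb{F}_2[a,u^{\pm1}][x])$ with $x$ primitive and $\eta_R(u) = u + a^2x$. The second input is a change-of-rings: this Hopf algebroid is the one ``extended'' from the Hopf algebra $\mathbb{F}_2[x]$ ($x$ primitive of degree $1+\sigma$) along the coaction $u \mapsto u\otimes 1 + a^2\otimes x$ on $\mathbb{F}_2[a,u^{\pm1}]$, so its cohomology is $\widehat{\mathrm{Ext}}_{\mathbb{F}_2[x]}(\mathbb{F}_2, \mathbb{F}_2[a,u^{\pm1}])$; and the $a$-completion of this $\mathrm{Ext}$ is $\lim_n\mathrm{Ext}_{\mathbb{F}_2[x]/x^{2^n}}(\mathbb{F}_2, \mathbb{F}_2[a,u^{\pm1}])$, since $\eta_R(u^{-1}) = \sum_{k\ge 0}a^{2k}u^{-1-k}\otimes x^k$ pairs the $a$-adic filtration with powers of $x$ (so that the coaction factors through $\mathbb{F}_2[x]/x^{2^n}$ modulo $x^{2^n}$), while by the Lucas congruence on binomial coefficients $\mathbb{F}_2[x]/x^m$ is a quotient Hopf algebra exactly when $m$ is a power of $2$.

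I expect the main obstacle to be this last step, and specifically the interaction between $a$-completion and the spectral sequence: one must commute the (derived) $a$-completion past the passage to homotopy groups and past the cohomology of the cobar complex, control the $\lim^1$-terms coming from the $a$-power-torsion negative cone, and check that the reorganization really produces the clean comodule $\mathbb{F}_2[a,u^{\pm1}]$. I would organize this by working one $a^n$-truncation at a time --- where $\pi_{\star}(\mathbb{F}_2/a^n)$ is finite in each degree and its coaction manifestly factors through some finite $\mathbb{F}_2[x]/x^{2^{N(n)}}$ --- and only then passing to the inverse limit, verifying that the resulting inverse system is cofinal with $\{\mathrm{Ext}_{\mathbb{F}_2[x]/x^{2^n}}(\mathbb{F}_2, \mathbb{F}_2[a,u^{\pm1}])\}_n$ and Mittag--Leffler.
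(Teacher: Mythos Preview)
Your strategy matches the paper's: form the Amitsur complex for $\mathrm{N}\mathbb{F}_2 \to \mathbb{F}_2$, $a$-complete, and identify the resulting $E_1$-page. Two points deserve attention.

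First, your convergence argument for the uncompleted tower has a gap: $\Phi^{C_2}$ is a left adjoint and does not commute with totalization in general, so you cannot identify $\Phi^{C_2}F$ with the fiber of the levelwise-$\Phi^{C_2}$ Amitsur complex. (The functor $\mathrm{res}$ is fine, being a right adjoint.) The paper sidesteps this by invoking the equivariant analogue of Bousfield's nilpotent-completion theorem: since $\mathrm{N}\mathbb{F}_2$ is connective and $2$-complete, the argument of \cite[Theorem~6.6]{bousfield} with the equivariant Postnikov tower yields $\mathrm{N}\mathbb{F}_2 \simeq \mathrm{Tot}$ directly, after which one simply applies $(-)^{\wedge}_a$ to both sides. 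Your claim that the $a$-completed cobar complex is ``concentrated in finitely many cohomological filtrations'' in each fixed degree is also false: after completion each $E_1^{s,\star}$ is an infinite product.

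Second, your identification of the completed $E_2$-page is more circuitous than necessary. The paper observes that in the graded ring $\mathbb{F}_2[a,u^{\pm1},x_1,\dots,x_s]$ the elements $a^2u^{-1}x_i$ lie in $RO(C_2)$-degree $0$, and that graded $(a)$-completion coincides with completion at the degree-zero part of $(a)$; since $u$ is invertible, this is exactly $(x_1,\dots,x_s)$-completion. Hence the $a$-completed $E_1$-page \emph{is} $\lim_n$ of the cobar complex for $\mathbb{F}_2[x]/x^{2^n}$, with no need to work modulo $a^n$ and reassemble, and the $\lim^1$ vanishes because each tridegree is a finite $\mathbb{F}_2$-vector space. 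Your remark that $\eta_R(u^{-1}) = \sum_k a^{2k}u^{-1-k}\otimes x^k$ pairs the two filtrations is the right intuition, but the paper's formulation makes the identification of topologies on each graded piece immediate.
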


\begin{proof}
Since $\mathrm{N}\mathbb{F}_2$
is connective and 2-complete, there
is an identification\footnote{
The proof is that of 
\cite[Theorem 6.6]{bousfield}, where one
replaces the Postnikov tower with the
$C_2$-equivariant Postnikov tower.}:
	\[
	\mathrm{N}\mathbb{F}_2 
	\simeq \holim_{\Delta} \underline{\mathbb{F}_2}^{\otimes_{\mathrm{N}\mathbb{F}_2}
	\bullet +1}.
	\]
Since $a$-completion preserves homotopy limits, we have
	\[
	(\mathrm{N}\mathbb{F}_2)^{\wedge}_a
	\simeq
	\holim_{\Delta} \left(\underline{\mathbb{F}_2}^{\otimes_{\mathrm{N}\mathbb{F}_2}
	\bullet +1}\right)^{\wedge}_a.
	\]
Thus we get a spectral sequence with $E_1$-term given by
	\[
	E_1^{s, \star+s}=
	\pi_{\star}
	\left(\underline{\mathbb{F}_2}^{\otimes_{\mathrm{N}\mathbb{F}_2}
	s+1}\right)^{\wedge}_a.
	\]
Since $\underline{\mathbb{F}_2} \otimes_{\mathrm{N}\mathbb{F}_2}
\underline{\mathbb{F}_2}$ is free as an $\mathbb{F}_2$-module, the same
is true for each term $\underline{\mathbb{F}_2}^{\otimes_{\mathrm{N}\mathbb{F}_2}s+1}$.
The $a$-completion of $\underline{\mathbb{F}_2}$ has $RO(C_2)$-graded homotopy groups
	\[
	\pi_{\star}(\underline{\mathbb{F}_2})^{\wedge}_a \cong
	\mathbb{F}_2[a, u^{\pm 1}].
	\]
Combined with the computation in the previous
section, we may then identify the $s$th term of the $E_1$-page with
	\[
	\mathbb{F}_2[a, u^{\pm 1}, x_1, ..., x_s]^{\wedge}_{a},
	\]
and the $d_1$-differentials are determined by the coaction
$u \mapsto u + a^2 x$. Here observe that the $a$-completion
is taken in the \emph{graded} sense, 
so the completion of a graded ring
$R$ at the ideal $(a)$
is equivalent to completion with respect to $(a) \cap R_0$.  Thus, we may rewrite
the $s$th term as:
	\begin{align*}
	\mathbb{F}_2[a, u^{\pm 1}, x_1, ..., x_s]^{\wedge}_{a} &\cong
	\mathbb{F}_2[a, u^{\pm 1}, x_1, ..., x_s]^{\wedge}_{(a^2u^{-1}x_1,
	..., a^2u^{-1}x_s)}\\
	&\cong
	\mathbb{F}_2[a, u^{\pm 1}, x_1, ..., x_s]^{\wedge}_{(x_1, ..., x_s)}
	\\&= \lim_n \mathbb{F}_2[a, u^{\pm 1}]\otimes
	\left( \mathbb{F}_2[x]/(x^{2^n})\right)^{\otimes s}.
	\end{align*}
where the completions and limit are understood in the graded setting.
In other words, we may identify the $E_1$-term with the limit
of the cobar complexes:
	\[
	E_1 =
	\lim_n C^*_{\mathbb{F}_2[x]/(x^{2^n})}(\mathbb{F}_2[a, u^{\pm 1}]).
	\]
By the Milnor exact sequence (as in, e.g., \cite[Theorem 3.5.8]{weibel}), this gives the desired computation of the $E_2$-term modulo a possible ${\lim}^1$ contribution.
But for fixed $n$ and tridegree, these groups are finite-dimensional vector
spaces over $\mathbb{F}_2$, so the ${\lim}^1$ vanishes and the result follows.
\end{proof}

\section{Computation of the \texorpdfstring{$E_2$}{E2}-page} \label{sec:compute}

In this section we compute some information about the
$E_2$-page of the spectral sequence from the previous section.
Our principal aim will be to prove Corollary \ref{cor:vanishing} from the Introduction.

Write $\{E^{(n)}_{r}\}$ for the $x$-adic spectral sequence
	\[
	E^{(n)}_2 = \mathbb{F}_2[a, u, y_0, ..., y_{n-1}]
	\Rightarrow
	\mathrm{Ext}_{\mathbb{F}_2[x]/(x^{2^n})}(\mathbb{F}_2[a, u]),
	\]
where $y_i$ is represented by $[x^{2^i}]$ in the cobar complex,
and write $\{E_r\}$ or $\{E_r^{(\infty)}\}$ for the $x$-adic
spectral sequence
	\[
	E_2 = \mathbb{F}_2[a, u, y_i: i\ge 0] \Rightarrow
	\mathrm{Ext}_{\mathbb{F}_2[x]}(\mathbb{F}_2[a, u]).
	\]

\begin{theorem}\label{thm:effective-ext} We have
ring isomorphisms
	\[
	E_{\infty} =
	\mathbb{F}_2[a, u^{2^{r+1}m}y_r: m, r\ge 0]/(a^{2^{r+1}}u^{2^{r+1}m}y_r).
	\]
	\[
	E^{(n)}_{\infty}
	=
	\mathbb{F}_2[a, u^{2^n},
	u^{2^{r+1}m}y_r: m\ge 0, 0\le r \le n-1]/
	(a^{2^{r+1}}u^{2^{r+1}m}y_r).
	\]
Moreover, there are no nontrivial 
$\mathbb{F}_2[a]$-module extensions.
\end{theorem}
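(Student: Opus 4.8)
My plan is to compute all differentials in the $x$-adic spectral sequences, then resolve the $\mathbb{F}_2[a]$-module extension problem separately.

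\textbf{Setting up the differentials.} The sequences $\{E^{(n)}_r\}$ are multiplicative, and $a$ and the $y_i$ are permanent cycles -- $a$ is coaction-invariant, hence lies in $\mathrm{Ext}^0$, and $y_i$ is the class of the honest cobar cocycle $[x^{2^i}]$ -- so all differentials are determined by their values on $u$. The coaction is $u\mapsto u+a^2x$ (Theorem \ref{thm:hopf-structure}), and in characteristic two the Frobenius yields the \emph{exact} identity $\psi(u^{2^r})=u^{2^r}+a^{2^{r+1}}x^{2^r}$, with no lower-order corrections. Combined with multiplicativity, a degree-and-$x$-filtration count (which limits the possible targets), and Leibniz (so that $u^{2^r}=(u^{2^j})^{2^{r-j}}$ is a cycle for each earlier differential, $2^{r-j}$ being even), this forces the entire list of differentials to be
\[
d\bigl(u^{2^r}\bigr)=a^{2^{r+1}}y_r,\qquad 0\le r\ \ (\text{resp. }0\le r\le n-1);
\]
in the truncated case $u^{2^n}$ is moreover primitive, hence permanent. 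Characteristic two is what makes these differentials completely unambiguous.

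\textbf{Running the differentials.} I would compute the resulting iterated homology by induction on $r$. At stage $r$ the current page is free of rank two over the current coefficient ring on $\{1,u^{2^r}\}$, with $d(u^{2^r}\cdot p)=a^{2^{r+1}}y_r\,p$; taking homology imposes the relation $a^{2^{r+1}}y_r=0$ and introduces as new classes $u^{2^r}$ times generators of the annihilator of $a^{2^{r+1}}y_r$, which by unique factorization is the ideal $(y_0,\dots,y_{r-1})$. Accumulating these contributions across all stages -- and retaining, in the truncated case, the surviving polynomial generators $u^{2^n m}$ -- yields the two rings displayed in the theorem. I expect this bookkeeping to be the fussiest step: one must check that \emph{every} $m\ge0$ (not only the powers of two that appear first) occurs among the torsion generators, and that the annihilator of $u^{2^{r+1}m}y_r$ is exactly the ideal $(a^{2^{r+1}})$ and nothing smaller.

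\textbf{Identifying $E_\infty$ and the $\mathbb{F}_2[a]$-module structure.} To confirm that the page just obtained is already $E_\infty$, and that the $x$-adic filtration splits $\mathbb{F}_2[a]$-linearly, I would compute $\mathrm{Ext}_{\mathbb{F}_2[x]/x^{2^n}}(\mathbb{F}_2[a,u])$ as an $\mathbb{F}_2[a]$-module directly, via the $a$-Bockstein spectral sequence. Its $E_1$-page is $\mathrm{Ext}_{\mathbb{F}_2[x]/x^{2^n}}(\mathbb{F}_2[a,u]/a)=\mathbb{F}_2[u,y_0,\dots,y_{n-1}]$, since $\mathbb{F}_2[a,u]/a$ is the trivial comodule and its $x$-adic sequence collapses; the same Frobenius computation gives the Bockstein differentials $\beta_{2^{r+1}}(u^{2^r})=y_r$ and no others. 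This exhibits $\mathrm{Ext}_{\mathbb{F}_2[x]/x^{2^n}}(\mathbb{F}_2[a,u])$ as the direct sum of a free $\mathbb{F}_2[a]$-module and cyclic torsion modules $\mathbb{F}_2[a]/a^{2^{r+1}}$, one for each pair of classes killed by a $\beta_{2^{r+1}}$, and this agrees -- degree by degree -- with the $\mathbb{F}_2[a]$-module underlying the ring of the previous step. Since $E_\infty$ is a subquotient of that ring whose dimension in each degree is that of $\mathrm{Ext}$, there can be no further $x$-adic differentials; and since the two descriptions of $\mathrm{Ext}$ as an $\mathbb{F}_2[a]$-module coincide, there are no hidden $\mathbb{F}_2[a]$-extensions. (For the polynomial generators one can bypass the Bockstein: the non-$\sigma$ part of $|a^i u^{2^{r+1}m}y_r|$ equals $2^r(2m+1)$, whose $2$-adic valuation is exactly $r$, so within any fixed internal degree all cohomological-degree-one classes of $E_\infty$ carry the same $x$-filtration, leaving no room for a hidden extension.)
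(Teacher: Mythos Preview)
Your overall strategy matches the paper's: induct on the page, using $d(u^{2^r})=a^{2^{r+1}}y_r$ from the Frobenius of the coaction. The paper, however, organizes everything around a lemma you do not isolate: the connecting map
\[
\delta\colon \mathrm{Ext}^0\bigl(\mathbb{F}_2[a,u]/a^{2^{r+1}}\bigr)\longrightarrow \mathrm{Ext}^1\bigl(\mathbb{F}_2[a,u]\bigr)
\]
sends the primitive $u^{2^r(2m+1)}$ to an honest cobar cocycle with leading term $u^{2^{r+1}m}[x^{2^r}]$. This single observation (i) certifies each $u^{2^{r+1}m}y_r$ as a permanent cycle \emph{before} any pages are computed, so at stage $t$ only monomials with $m(I)\ge t$ can support a differential, and (ii) resolves the $\mathbb{F}_2[a]$-extension problem, since the image of $\delta$ is $a^{2^{r+1}}$-torsion by construction. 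Your $a$-Bockstein proposal is the global form of the same $\delta$, and your parenthetical $2$-adic valuation argument is a valid alternative the paper does not use.

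Your Step~2 has a real gap. After the first differential the page acquires zero-divisors and new generators such as $u^2y_0$, and neither ``free of rank two over the coefficient ring on $\{1,u^{2^r}\}$'' nor ``annihilator $=(y_0,\dots,y_{r-1})$ by unique factorization'' survives. Concretely, on the page $P$ after stage~$1$: any graded subring $C$ with $P=C\oplus u^4C$ must contain $u^2y_0$ (there is nothing of $u$-degree~$-2$), hence also $(u^2y_0)^2=u^4y_0^2\in C\cap u^4C$, contradicting freeness. And at stage~$2$ the element $u^2y_0$ annihilates $a^8y_2$ (since $a^2y_0=0$) yet does \emph{not} lie in the ideal $(y_0,y_1)\subset P$, because no element of $P$ has $u$-degree~$2$ and trivial $y$-part. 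So your iterated kernel/cokernel recipe, taken literally, undercounts the surviving classes. The paper avoids this by carrying the explicit monomial basis
\[
\{\,a^m u^k y_I \;:\; m(I)<t \Rightarrow m<2^{m(I)+1}\text{ and }2^{m(I)+1}\mid k;\quad m(I)\ge t \Rightarrow 2^t\mid k\,\}
\]
through the induction; in that basis both the cycles and the boundaries at each stage are visible without any freeness or UFD hypothesis, and the annihilator in question is exactly the span of monomials with $m(I)\le r-1$.
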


The proof will require the following lemma.

\begin{lemma}\label{lem:coboundary}
The elements $u^{2^{r+1}m}y_r \in E^{(n)}_2(\mathbb{F}_2[a,u])$
are permanent cycles for all $m\ge 0$ and $0\le r \le n-1$.
\end{lemma}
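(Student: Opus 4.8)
We need to show that the classes $u^{2^{r+1}m} y_r$ in the $x$-adic spectral sequence $E^{(n)}_2 = \mathbb{F}_2[a,u,y_0,\dots,y_{n-1}] \Rightarrow \mathrm{Ext}_{\mathbb{F}_2[x]/x^{2^n}}(\mathbb{F}_2[a,u])$ survive to $E^{(n)}_\infty$. The class $y_r$ is represented in the cobar complex by the cocycle $[x^{2^r}]$, so the obstruction to $y_r$ itself being a permanent cycle is the failure of $[x^{2^r}]$ to be a \emph{strict} cocycle modulo higher $x$-adic filtration: in the cobar complex for $\mathbb{F}_2[x]/x^{2^n}$, the differential of $[x^{2^r}]$ picks up the primitively-failing part of the comultiplication $\Delta(x^{2^r})$, and the lowest-filtration correction involves $u$-multiples coming from the coaction $u \mapsto u \otimes 1 + 1 \otimes a^2 x$ on the comodule $\mathbb{F}_2[a,u]$. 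So the whole point is that one must bundle $y_r$ together with the right power of $u$ (namely $u^{2^{r+1}}$, matching $a^2 x$ raised to a power of two, Frobenius-style) to get an honest permanent cycle.

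**Plan.** The plan is to exhibit explicit cycle representatives in the cobar complex $C^*_{\mathbb{F}_2[x]/x^{2^n}}(\mathbb{F}_2[a,u])$. First I would recall that over $\mathbb{F}_2$ the comultiplication on $\mathbb{F}_2[x]/x^{2^n}$ satisfies $\Delta(x^{2^r}) = \sum_{i+j=2^r} \binom{2^r}{i} x^i \otimes x^j$, and by Lucas' theorem the only surviving terms are $x^{2^r}\otimes 1$, $1 \otimes x^{2^r}$, and cross terms $x^i \otimes x^j$ with $i,j$ proper powers-of-two-sums below $2^r$ — the key structural fact being that $x^{2^r}$ is primitive modulo decomposables of strictly smaller $x$-degree. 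Then, working in the cobar complex of the comodule algebra $\mathbb{F}_2[a,u]$ with coaction $u \mapsto u\otimes 1 + 1\otimes a^2x$, I would compute $d([x^{2^r}])$ and $d$ of the lower terms $u^{\text{something}}[x^{i}\otimes x^{j}]$, and assemble a correction term so that $u^{2^{r+1}m}[x^{2^r}]$ plus lower-$x$-filtration corrections is a strict cocycle. The cleanest way to organize this: since everything is $2$-typical/Frobenius-linear, I expect the cocycle to be built from $u^{2^{r+1}m}$ times $[x^{2^r}]$ plus terms of the form (polynomial in $a,u$) $\cdot [x^{2^i}] $ for $i<r$, and the pattern of corrections should be forced by the single relation $a^2 x$ appearing in the coaction. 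An alternative, possibly slicker, route is to argue geometrically: produce an actual map of $C_2$-spectra or of comodules realizing $u^{2^{r+1}} y_r$ as the image of a genuine class — e.g. by comparison with the $a$-completed computation of $\pi_\star \mathbb{F}_2$ where $u$ is invertible and the relation $u \mapsto u + a^2 x$ from Theorem~\ref{thm:hopf-structure} can be iterated and $2$-typically rearranged — and then note that permanent cycles pull back along the comparison. I would likely present the explicit cobar computation since it is self-contained, and remark on the conceptual reason in a sentence.

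**Main obstacle.** The hard part will be controlling the lower-$x$-filtration correction terms precisely enough: writing down $d([x^{2^r}])$ is easy, but verifying that the required correction actually exists (rather than producing a nonzero obstruction in some intermediate filtration) amounts to checking that a certain explicit class in $C^2$ is a coboundary, and this is where one genuinely uses that we have multiplied by $u^{2^{r+1}m}$ rather than some other power — the powers of $2$ are doing real work via Frobenius, and an off-by-a-power-of-two error would leave a live obstruction. A secondary subtlety is bookkeeping the $x$-adic (internal) filtration versus the cobar (homological) filtration simultaneously, and making sure the corrections themselves do not support later differentials; for this I would note that any such later differential would have to land in a bidegree that the ring structure of $E^{(n)}_2$ already shows to be concentrated in elements of the claimed form, so by induction on $r$ (the $r=0$ case being that $y_0 = [x]$ together with $u^2$-multiples is already a strict cocycle, essentially by the defining coaction) the corrections are themselves built from permanent cycles and contribute nothing new.
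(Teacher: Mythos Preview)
Your proposal rests on a false premise. Over $\mathbb{F}_2$, the class $x^{2^r}$ is \emph{primitive}: Lucas' theorem gives $\binom{2^r}{i}\equiv 0\pmod 2$ for every $0<i<2^r$, so there are no cross terms in $\Delta(x^{2^r})$. Hence $[x^{2^r}]$ is already a strict cocycle in the cobar complex, and $y_r$ by itself is a permanent cycle. The obstruction you are trying to kill does not come from the comultiplication on $x^{2^r}$; it comes entirely from the factor $u^{2^{r+1}m}$, which is \emph{not} a permanent cycle (indeed $d_{2^{t}}(u^{2^{t}})=a^{2^{t+1}}y_{t}$ in this spectral sequence). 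Concretely, $d\bigl(u^{2^{r+1}m}[x^{2^r}]\bigr)$ picks up terms from the coaction $\psi(u^{2^{r+1}m})=\sum_{j}\binom{2^{r+1}m}{j}u^{2^{r+1}m-j}a^{2j}\otimes x^{j}$, and these live in \emph{higher} $x$-adic filtration (not lower, as you wrote), since the nonzero binomials force $j\ge 2^{r+1}$. So the induction you sketch, and in particular the ``base case'' claim that $u^{2m}[x]$ is a strict cocycle, does not hold as stated.

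The paper sidesteps the explicit correction terms entirely by a Bockstein trick. One computes in the cobar complex that
\[
d\bigl(u^{\,2^r(2m+1)}\bigr)=\sum_{j\ge 1}\binom{2^r(2m+1)}{j}u^{\,2^r(2m+1)-j}a^{2j}[x^j],
\]
and by Lucas every surviving $j$ has $j\ge 2^r$, so every term is divisible by $a^{2^{r+1}}$. Thus $u^{\,2^r(2m+1)}$ is primitive in the quotient comodule $\mathbb{F}_2[a,u]/(a^{2^{r+1}})$, and the Bockstein $\delta$ associated to $a^{2^{r+1}}$ produces a genuine cocycle $\delta\bigl(u^{\,2^r(2m+1)}\bigr)\in\mathrm{Ext}^1(\mathbb{F}_2[a,u])$ whose leading term in $x$-adic filtration is exactly $u^{2^{r+1}m}[x^{2^r}]$. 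This is a one-line construction of the lift, and as a bonus it is automatically annihilated by $a^{2^{r+1}}$, which resolves the $\mathbb{F}_2[a]$-module extension needed later in the paper. Your approach of hunting for explicit corrections could in principle be made to work, but once the source of the corrections is correctly located in the $u$-coaction you will find you are just unpacking this Bockstein by hand.
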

\begin{proof} Let $\delta$ denote the Bockstein
	\[
	\delta: \mathrm{Ext}^0(\mathbb{F}_2[a,u]/(a^{2^{r+1}}))
	\to \mathrm{Ext}^1(\mathbb{F}_2[a, u]).
	\]
In the cobar complex we have
	\[
	d(u^{2^r(2m+1)})
	\equiv u^{2^{r+1}m}a^{2^{r+1}}[x^{2^r}] \mod x^{2^r+1}.
	\]
It follows that $u^{2^{r}(2m+1)}$ is primitive in
$\mathbb{F}_2[a,u]/(a^{2^{r+1}})$ and that
$\delta(u^{2^r(2m+1)})$ is represented
by $u^{2^{r+1}m}[x^{2^r}]$ modulo terms of higher filtration.
This provides a lift of the element $u^{2^{r+1}m}y_r$ to a cocycle
in the cobar complex, which completes the proof.
\end{proof}

\begin{proof}[Proof of Theorem \ref{thm:effective-ext}] 
We will
prove by induction on $t \le n$ that
 $E^{(n)}_{2^{t-1} + 1} = E^{(n)}_{2^{t}}$, and
		\[
		E^{(n)}_{2^{t}} = \mathbb{F}_2[a, u^{2^t}, u^{2^{r+1}m}y_r:
		m\ge 0, 0 \le r \le n-1]/(a^{2^{r+1}}y_r:
		r\le t-1).
		\]
Note that, in this case, $E_{2^n}^{(n)}$ is generated by permanent
cycles so the spectral sequence stops at this page, which is also
the advertised answer.

The base case is trivial, so we assume the result holds for $t$
and turn to the inductive step. Let $I = (i_0, i_1, ..., i_{n-1})$ be a
tuple
of nonnegative integers and denote by $y_I$ the monomial
$y_0^{i_0}y_1^{i_1}\cdots y_{n-1}^{i_{n-1}}$. Given such a monomial,
denote by $m(I)$ the minimal nonzero index in $I$. Then the elements
	\[
	a^mu^{k}y_I
	\]
with
	\begin{itemize}
	\item $m(I) \le t-1$, $m\le 2^{m(I)+1}-1$, and
	$k$ divisible by $2^{m(I)+1}$; or
	\item $m(I) \ge t$ and $k$ divisible by $2^t$
	\end{itemize}
form an $\mathbb{F}_2$-basis for $E^{(n)}_{2^t}$. If
$m(I) \le t-1$
then this element is a product of permanent cycles
by the previous lemma.
Otherwise, using the cobar differential $d(u^{2^t}) =
a^{2^{t+1}}[x^{2^t}]$, we see that
	\[
	d_{2^t}(a^mu^{2^t\ell}y_I)
	=
	a^my_I (\ell (u^{2^t})^{\ell -1} a^{2^{t+1}}y_t),
	\]
and so
	\[
	E^{(n)}_{2^t + 1} = 
	\mathbb{F}_2[a, u^{2^{t+1}}, u^{2^{r+1}m}y_r]/(a^{2^{r+1}}y_r:
	r\le t+1).
	\]
In the cobar complex we have $d(u^{2^{t+1}}) = a^{2^{t+2}}[x^{2^{t+1}}]$,
so $u^{2^{t+1}}$ survives to $E_{2^{t+1}}$ in the $x$-adic spectral sequence,
and the other algebra generators are permanent cycles. This completes
the induction and the theorem follows modulo extension problems.
With notation as in the previous lemma, we note that
$\delta(u^{2^r(2m+1)})$ provides a lift of $u^{2^{r+1}m}y_r$ which is
automatically annihilated by $a^{2^{r+1}}$. 
This resolves the $\mathbb{F}_2[a]$-module extension problem.

The case of $n=\infty$ is essentially the same (or could also be deduced
from the above computation).
\end{proof}

We are now ready to deduce Corollary \ref{cor:vanishing} from the Introduction:

\begin{cor*}
Let $p$ and $q$ denote integers such that $p+q<0$.  Then
	\[
	\pi_{p+q\sigma}(\mathrm{N}\mathbb{F}_2)^{\wedge}_a
	=
	\begin{cases}
	0 & p\ne 0\\
	\mathbb{F}_2\{a^{-q}\} & p=0.
	\end{cases}
	\]
\end{cor*}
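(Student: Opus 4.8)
The plan is to run the spectral sequence of Theorem~\ref{thm:main} and observe that, in the range of negative total degree, its $E_2$-page is supported in such a thin region that the spectral sequence collapses with no room for differentials or extension problems. Throughout, write $p = k-s$ and $q = \ell$ for the coordinates of the target stem, so that a class of cohomological degree $s$ and internal degree $k+\ell\sigma$ contributes to $\pi_{p+q\sigma}$ with $p+q = k+\ell-s$. The first step is to record the value of $p+q$ on the algebra generators of the $x$-adic $E_\infty$-page of Theorem~\ref{thm:effective-ext}. Since $|a| = -\sigma$, $|u| = 1-\sigma$, and $y_r = [x^{2^r}]$ has cohomological degree $1$ and internal degree $2^r(1+\sigma)$, one computes $(p+q)(a) = -1$, $(p+q)(u) = 0$, and $(p+q)(y_r) = 2^{r+1}-1$. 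Hence a monomial $a^m u^c y_I$ with $I=(i_0,\dots,i_{n-1})$ lands in the stem with
\[
p+q = \sum_r i_r(2^{r+1}-1) - m,
\]
independently of the exponent $c$.

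The second step is to feed in the ring structure. Inverting $u$ --- equivalently, inverting the element $u^{2^n}$, which is primitive because $x^{2^n}=0$ in $\mathbb{F}_2[x]/x^{2^n}$ --- is exact and commutes with the $x$-adic filtration, so the associated graded of $\mathrm{Ext}_{\mathbb{F}_2[x]/x^{2^n}}(\mathbb{F}_2,\mathbb{F}_2[a,u^{\pm 1}])$ is obtained from the $E_\infty$-page of Theorem~\ref{thm:effective-ext} by inverting $u$. In particular it still satisfies $a^{2^{r+1}}y_r = 0$ for $0\le r\le n-1$, and its part in cohomological degree $0$ is $\mathbb{F}_2[a,u^{\pm 2^n}]$. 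Now fix a stem with $p+q<0$. If $s=|I|\ge 1$, let $r_0 = m(I)$ be the least index with $i_{r_0}\ge 1$; nonvanishing of $a^mu^cy_I$ forces $m<2^{r_0+1}$ (otherwise the monomial is a multiple of $a^{2^{r_0+1}}y_{r_0}=0$), while $p+q<0$ forces $m > \sum_r i_r(2^{r+1}-1)\ge 2^{r_0+1}-1$, i.e.\ $m\ge 2^{r_0+1}$ --- a contradiction. So in the region $p+q<0$ the group $\mathrm{Ext}_{\mathbb{F}_2[x]/x^{2^n}}(\mathbb{F}_2,\mathbb{F}_2[a,u^{\pm1}])$ is concentrated in cohomological degree $0$, where it is spanned by the monomials $a^mu^c$ with $m\ge 1$ and $2^n\mid c$. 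Taking the limit over $n$ that defines the $E_2$-page of Theorem~\ref{thm:main}: in a fixed stem one has $p=c$ and $q=-c-m$, so the only possible class is $a^{-(p+q)}u^{p}$, and it survives the limit precisely when $p=0$. Thus, throughout $p+q<0$, the page $E_2$ is $\mathbb{F}_2\{a^{-q}\}$ in filtration $0$ when $p=0$ and vanishes otherwise.

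The final step is formal. A differential $d_r$ increases cohomological degree by $r\ge 2$ and decreases $p$, hence $p+q$, by $1$, so any differential touching the region $p+q<0$ has one endpoint of cohomological degree $\ge 2$ and total degree $<0$, where $E_2$ (and hence $E_r$) vanishes. Therefore every class in the region is a permanent cycle and none is a boundary, and since the region is concentrated in a single filtration there are no extension problems; the spectral sequence --- which, as constructed in Section~\ref{sec:E2-page}, is the conditionally convergent homotopy-limit spectral sequence of a cosimplicial resolution and has bounded filtration in each of these stems --- then computes $\pi_{p+q\sigma}(\mathrm{N}\mathbb{F}_2)^\wedge_a$ exactly. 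For $p\ne 0$ this group vanishes; for $p=0$ it is $\mathbb{F}_2\{a^{-q}\}$, generated by the image of $a^{-q}\in\pi_{-q\sigma}(S)$, which is nonzero since it is detected in filtration zero. The step requiring the most care is the bookkeeping of the $RO(C_2)$-degrees and, relatedly, the compatibility of the localization at $u$ with the completed $\mathrm{Ext}$ of Theorem~\ref{thm:main}; once the inequality $m\ge 2^{m(I)+1}$ is in hand, the collapse is immediate.
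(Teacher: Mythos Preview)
Your proof is correct and follows the same route as the paper: reduce to the $E_2$-page of Theorem~\ref{thm:main}, use the primitivity of $u^{2^n}$ to pass between $\mathrm{Ext}$ over $\mathbb{F}_2[x]/x^{2^n}$ with and without $u$ inverted, and then read off the vanishing from the $x$-adic associated graded of Theorem~\ref{thm:effective-ext}. Your inequality argument---$p+q<0$ forces $m\ge 2^{m(I)+1}$, which contradicts the relation $a^{2^{m(I)+1}}y_{m(I)}=0$---and your explicit treatment of the filtration-zero limit make precise what the paper's one-line appeal to ``each multiplicative generator \dots satisfies $p+q\ge 0$'' leaves to the reader.
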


\begin{proof}
It suffices to prove this on the $E_2$-term
of the spectral sequence in Theorem \ref{thm:main}.
Indeed, once this result is known on the $E_2$-page,
we see that the classes $a^{-q}$ must be permanent
cycles since their potential targets lie in the vanishing
range.  They cannot be the target of differentials
since they lie in filtration 0.

We will show that, in positive filtration,
the $E_2$-term vanishes when $p+q<0$; the filtration
zero contribution is easily seen to be just $\mathbb{F}_2\{a^{-q}\}$. It further suffices to verify this
vanishing for each group $\mathrm{Ext}_{\mathbb{F}_2[x]/(x^{2^n})}(\mathbb{F}_2[a,u^{\pm1}])$
appearing in the limit defining the $E_2$-page.

Since $u^{2^n}$ is $\mathbb{F}_2[x]/(x^{2^n})$-primitive in 
$\mathbb{F}_2[a, u]$, we have that
	\[
	\mathrm{Ext}_{\mathbb{F}_2[x]/(x^{2^n})}(\mathbb{F}_2[a, u^{\pm 1}])
	=\mathrm{Ext}_{\mathbb{F}_2[x]/(x^{2^n})}
	(\mathbb{F}_2[a, u])
	[(u^{2^n})^{-1}].
	\]
Since $u$ has underlying topological degree $0$, we
can verify the vanishing claim before inverting $u$.
But there it follows immediately from Theorem
\ref{thm:effective-ext}, since each multiplicative generator
of the associated graded, in positive filtration,
satisfies $p+q\ge 0$. 
\end{proof}

%
%
%
%

\section{The Segal conjecture}\label{sec:segal}

In this section, we prove the Segal conjecture in the following form:

\begin{theorem} \label{thm:segalmain}
Let $X$ denote any bounded below spectrum.  Then the natural map
$$\mathrm{N}X \to (\mathrm{N}X)^{\wedge}_a$$
is an equivalence after $2$-completion.
\end{theorem}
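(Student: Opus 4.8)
The plan has two parts: a formal reduction of the general statement to the case $X=\mathbb{F}_2$, and then a proof of that case --- which is Theorem~\ref{thm:NF2Segal} --- out of the computation recorded in Corollary~\ref{cor:vanishing}.

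\emph{Step 1: reduction to $X=\mathbb{F}_2$.} For any $C_2$-spectrum $Y$, the arithmetic square appearing in the proof of Theorem~\ref{thm:hopf-structure},
	\[
	\xymatrix{
	Y\ar[r]\ar[d] & Y[a^{-1}]\ar[d]\\
	Y^{\wedge}_a \ar[r] & Y^{\wedge}_a[a^{-1}],
	}
	\]
is a pullback; hence $Y\to Y^{\wedge}_a$ is an equivalence if and only if $Y[a^{-1}]\to Y^{\wedge}_a[a^{-1}]$ is, both conditions asserting the vanishing of the common fiber. Since $Y[a^{-1}]=\widetilde{E}C_2\otimes Y$ has genuine fixed points $\Phi^{C_2}Y$, one has $\pi_{\star}(Y[a^{-1}])\cong\pi_*(\Phi^{C_2}Y)[a^{\pm1}]$; for $Y=\mathrm{N}X$ this equals $\pi_*(X)[a^{\pm1}]$ because $\Phi^{C_2}\mathrm{N}X\simeq X$, while $\Phi^{C_2}(\mathrm{N}X)^{\wedge}_a\simeq(X\otimes X)^{tC_2}$ because an $a$-complete (equivalently Borel complete) $C_2$-spectrum has geometric fixed points the Tate construction of its underlying spectrum. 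Under these identifications the right-hand vertical map of the square for $Y=\mathrm{N}X$ is, up to equivalence, the Tate diagonal $X\to(X\otimes X)^{tC_2}$ of \cite{nikolaus-scholze}, so Theorem~\ref{thm:segalmain} for $X$ is equivalent to the statement that this Tate diagonal is an equivalence after $2$-completion. By \cite[III.1.7]{nikolaus-scholze} --- concretely, using that $X\mapsto(X\otimes X)^{tC_2}$ is exact on bounded below spectra together with a Postnikov (or Adams) induction --- this holds for every bounded below $X$ as soon as it holds for $X=\mathbb{F}_2$. We stress that only this reduction is imported from \cite{nikolaus-scholze}; its customary input, the classical Segal conjecture for $\mathbb{F}_2$, is exactly what Step~2 supplies independently.

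\emph{Step 2: the case $X=\mathbb{F}_2$.} This is Theorem~\ref{thm:NF2Segal}, and by the pullback square above (with $Y=\mathrm{N}\mathbb{F}_2$) it suffices to show that $\mathrm{N}\mathbb{F}_2[a^{-1}]\to(\mathrm{N}\mathbb{F}_2)^{\wedge}_a[a^{-1}]$ is an equivalence. The source has $\pi_{\star}\cong\pi_*(\Phi^{C_2}\mathrm{N}\mathbb{F}_2)[a^{\pm1}]\cong\mathbb{F}_2[a^{\pm1}]$, since $\Phi^{C_2}\mathrm{N}\mathbb{F}_2\simeq\mathbb{F}_2$. For the target, I would compute
	\[
	\pi_{p+q\sigma}\bigl((\mathrm{N}\mathbb{F}_2)^{\wedge}_a[a^{-1}]\bigr)\;\cong\;\colim_n\,\pi_{p+(q-n)\sigma}(\mathrm{N}\mathbb{F}_2)^{\wedge}_a,
	\]
and note that once $n$ is large enough the indices satisfy $p+(q-n)<0$, so Corollary~\ref{cor:vanishing} identifies each term with $\mathbb{F}_2\{a^{n-q}\}$ when $p=0$ and with $0$ otherwise, while the transition maps (multiplication by $a$) are isomorphisms in that range. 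Hence the target also has $\pi_{\star}\cong\mathbb{F}_2[a^{\pm1}]$. Finally, the map in question is a ring map --- the functor $Y\mapsto Y^{\wedge}_a[a^{-1}]$ being lax symmetric monoidal --- sending the unit to the unit and $a$ to $a$, so on $\pi_{\star}$ it is the identity of $\mathbb{F}_2[a^{\pm1}]$ and therefore an equivalence. This proves Theorem~\ref{thm:NF2Segal}, and, combined with Step~1, Theorem~\ref{thm:segalmain}.

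\emph{The main obstacle.} Essentially all of the genuinely new input has already been isolated as Corollary~\ref{cor:vanishing}, so within this section the one delicate point is Step~1: pinning down the identification of the $a$-inverted square with the Tate diagonal precisely enough to invoke the \emph{reduction} half of \cite[III.1.7]{nikolaus-scholze} while being careful not to reintroduce the classical Segal conjecture as a hidden hypothesis. The remaining ingredients --- the arithmetic square, the value of $\Phi^{C_2}\mathrm{N}(-)$, and the lax monoidal localization $Y\mapsto Y^{\wedge}_a[a^{-1}]$ --- are all already on the table.
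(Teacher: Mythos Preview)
Your proposal is correct and follows essentially the same route as the paper: the fracture square reduces to checking the $a$-inverted map, which is identified with the Tate diagonal and then reduced to $X=\mathbb{F}_2$ via \cite[III.1.7]{nikolaus-scholze}; for $\mathbb{F}_2$ both sides of the $a$-inverted map are shown to be $\mathbb{F}_2[a^{\pm1}]$ using $\Phi^{C_2}\mathrm{N}\mathbb{F}_2\simeq\mathbb{F}_2$ and Corollary~\ref{cor:vanishing}. Your explicit colimit description of $\pi_{\star}(-)[a^{-1}]$ and the final ring-map argument that the induced map on $\mathbb{F}_2[a^{\pm1}]$ is the identity are minor elaborations of points the paper leaves implicit (the paper's Corollary already names the generator $a^{-q}$), but the substance is the same.
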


The key point is the following standard observation:

\begin{lemma} \label{lem:tate}
Let $X$ be a bounded below spectrum.  Then, to prove Theorem \ref{thm:segalmain}, it suffices to show that
$$(\mathrm{N}X)[a^{-1}] \to (\mathrm{N}X)^{\wedge}_a[a^{-1}]$$
is an equivalence after $2$-completion.  This in turn is equivalent to the claim that the Tate diagonal
$$X \to (X \wedge X)^{tC_2}$$
is an equivalence after $2$-completion.
\end{lemma}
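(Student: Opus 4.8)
The plan is to run everything through the arithmetic fracture square of Section~\ref{sec:hoch-str}, and then to identify the $a$-inverted comparison map with the Tate diagonal by passing to geometric fixed points.

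For the first claim, recall from Section~\ref{sec:hoch-str} that for any $C_2$-spectrum $Y$ the square
	\[
	\xymatrix{
	Y \ar[r]\ar[d] & Y[a^{-1}]\ar[d]\\
	Y^{\wedge}_a \ar[r] & Y^{\wedge}_a[a^{-1}]
	}
	\]
is cartesian. Specializing to $Y = \mathrm{N}X$, the cofibers of the two vertical maps are canonically identified, so $\mathrm{N}X \to (\mathrm{N}X)^{\wedge}_a$ is an equivalence precisely when $(\mathrm{N}X)[a^{-1}] \to (\mathrm{N}X)^{\wedge}_a[a^{-1}]$ is, and likewise after $2$-completion. This proves that it suffices to check the $a$-inverted statement.

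For the second claim, I would first observe that a map between $C_2$-spectra on which $a$ acts invertibly is an equivalence if and only if it is one on geometric fixed points: since $\mathrm{res}^{C_2}_e(a)$ is nullhomotopic, $a$-periodicity forces the underlying spectrum to vanish, and a $C_2$-spectrum with vanishing underlying spectrum is zero as soon as its geometric fixed points vanish. Both $(\mathrm{N}X)[a^{-1}]$ and $(\mathrm{N}X)^{\wedge}_a[a^{-1}]$ are $a$-periodic, so the comparison map is a $2$-adic equivalence iff it is so on geometric fixed points. On the source, $(\mathrm{N}X)^{\Phi C_2}\simeq X$ by the defining property of the Hill--Hopkins--Ravenel norm. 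For the target, I would use that $\mathrm{N}X$ is bounded below (as $X$ is), so that --- exactly as in Section~\ref{sec:E2-page} --- its $a$-completion agrees with its Borel completion $F(EC_{2+},\mathrm{N}X)$; the isotropy separation cofiber sequence for $F(EC_{2+},\mathrm{N}X)$ then identifies its geometric fixed points with $(\mathrm{res}^{C_2}_e \mathrm{N}X)^{tC_2} = (X\wedge X)^{tC_2}$, the underlying swap action. Tracing through these identifications, the resulting map $X \to (X\wedge X)^{tC_2}$ is the Tate diagonal in its $C_2$-equivariant incarnation from \cite[III.1]{nikolaus-scholze}, and this finishes the lemma.

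The routine inputs are the fracture square and the detection of $a$-periodic $C_2$-spectra on geometric fixed points; the step needing genuine care is the last one, namely matching the $a$-completion of the bounded-below spectrum $\mathrm{N}X$ with its cofree completion and then checking that applying geometric fixed points to the cofree-completion unit really reproduces the Tate diagonal \emph{as classically defined}, rather than an a priori different natural transformation $X \to (X\wedge X)^{tC_2}$. Once this comparison of constructions is pinned down, the lemma follows formally.
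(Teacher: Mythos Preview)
Your argument is correct and follows essentially the same route as the paper: both use the arithmetic fracture square to reduce to the $a$-inverted map, observe that the underlying spectra vanish (since $\mathrm{res}(a)\simeq 0$), and then identify the map on fixed points with the Tate diagonal via \cite{behrens-wilson} and \cite[\S III.1]{nikolaus-scholze}. Two small remarks: for $a$-periodic $C_2$-spectra the genuine and geometric fixed points coincide, so your phrasing and the paper's are equivalent; and the identification of $a$-completion with the cofree spectrum $F(EC_{2+},-)$ holds for all $C_2$-spectra (indeed, the paper takes this as the definition of Borel completion), so no bounded-below hypothesis is needed there and the reference to Section~\ref{sec:E2-page} is not quite apt.
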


\begin{proof}
The first part of the lemma follows from the pullback fracture square
	\[
	\xymatrix{
	\mathrm{N}X \ar[r]\ar[d] & (\mathrm{N}X)[a^{-1}] \ar[d]\\
	(\mathrm{N}X)^{\wedge}_a \ar[r] &
	(\mathrm{N}X)^{\wedge}_a[a^{-1}]
	}
	\]
Since this is a pullback, the left hand vertical map is an equivalence after $2$-completion if and only if the right hand vertical map is an equivalence after $2$-completion.
To obtain the second part of the lemma, note that the non-equivariant map underlying $a$ is nullhomotopic.  Thus, the non-equivariant spectra underlying $(\mathrm{N}X)[a^{-1}]$ and $(\mathrm{N}X)^{\wedge}_a[a^{-1}]$ are trivial.  This means that it suffices to check that the map on $C_2$-fixed points
$$\left((\mathrm{N}X)[a^{-1}]\right)^{C_2} \to \left((\mathrm{N}X)^{\wedge}_a[a^{-1}]\right)^{C_2}$$
is an equivalence after $2$-completion.  The above map is identified with the Tate diagonal via 
\cite[\S 2]{behrens-wilson} and 
\cite[\S III.1.5]{nikolaus-scholze}.
\end{proof}

Next, we refer to an argument of Nikolaus--Scholze \cite[proof of III.1.7]{nikolaus-scholze}, where they show that the Tate diagonal
$$X \to (X \wedge X)^{tC_2}$$
is an equivalence after $2$-completion, for all bounded below $X$, if it is when $X=\mathbb{F}_2$.  Given this, we are ready to prove Theorem \ref{thm:segalmain}.

\begin{proof}[Proof of Theorem \ref{thm:segalmain}]
We are reduced to proving that 
$$(\mathrm{N}\mathbb{F}_2)[a^{-1}] \to (\mathrm{N}\mathbb{F}_2)^{\wedge}_a[a^{-1}]$$
is an equivalence. 
Observe that, when $a$ acts invertibly on
a $C_2$-spectrum $Y$, we have
	\[
	\cdot a^q: \pi_{p+q\sigma}Y
	\stackrel{\cong}{\longrightarrow}
	\pi_p(Y^{C_2}).
	\]
Since $(\mathrm{N}\mathbb{F}_2[a^{-1}])^{C_2}
=\mathbb{F}_2$, we deduce that
$\pi_{\star}\mathrm{N}\mathbb{F}_2[a^{-1}]
=\mathbb{F}_2[a^{\pm 1}]$.
So it suffices to show that:
	\[
	\pi_{\star}(\mathrm{N}\mathbb{F}_2)^{\wedge}_a[a^{-1}]
	=\mathbb{F}_2[a^{\pm 1}]. 
	\]
Multiplication by $a$ decreases underlying 
topological dimension, so it further suffices to show that,
when $p+q<0$, we have:
	\[
	\pi_{p+q\sigma}
	(\mathrm{N}\mathbb{F}_2)^{\wedge}_a
	=
	\begin{cases}
	0 & p\ne 0\\
	\mathbb{F}_2\{a^{-q}\} & p=0
	\end{cases}
	\]
This is the statement of Corollary \ref{cor:vanishing}.
\end{proof}

\begin{remark}
Setting $X=S^0$, it follows from the above that
$$(S^0)^{\wedge}_{2} \simeq (S^0 \wedge S^0)^{tC_2} \simeq (S^0)^{tC_2}.$$
After identifying $(S^0)^{tC_2}$ with $\Sigma \mathbb{RP}^{\infty}_{-\infty}$, the original version of Lin's theorem follows.
\end{remark}

\section{Epilogue}\label{sec:epilogue}

\subsection*{Integer stems}
Over the last few years, there have been several attempts to understand the homotopy groups of the non-equivariant spectrum
$$\left(\mathrm{N}\mathbb{F}_2 \right)^{C_2} = (\mathbb{F}_2 \wedge \mathbb{F}_2)^{hC_2}.$$
This seems especially interesting in light of forthcoming work of Mingcong Zeng and Lennart Meier, which uses the equivalence
$$\Phi^{C_2}N_{C_2}^{C_4}\mathrm{BP}\mathbf{R} \simeq \mathrm{N}\mathbb{F}_2$$
to relate these homotopy groups to the slice spectral sequence differentials studied by Hill-Shi-Wang-Xu \cite{c4slice}

The most straightforward approach to these homotopy groups is via the homotopy fixed point spectral sequence.  However, even the $E_2$-page, given by the group cohomology $H^*(C_2; \mathcal{A}_*)$, is largely unknown at this time \cite{crossley-whitehouse}.  Another approach, pursued independently in unpublished work by J.D. Quigley and Tyler Lawson, is to use the non-equivariant $\mathbb{F}_2$-Adams spectral sequence.
Quigley was able to use the Adams spectral sequence to obtain some results about $\pi_*(\mathrm{N}\mathbb{F}_2^{C_2})$ for $* < 10$.  We suspect that the use of the equivariant $\mathbb{F}_2$-Adams spectral sequence for $\pi_{\star}\mathrm{N}\mathbb{F}_2$ would lead to similar complications as those encountered by Lawson and Quigley.

The \emph{relative} Adams spectral sequence of this paper, restricted to integer stems $\pi_{p+0\sigma} \mathrm{N}\mathbb{F}_2$, provides yet another route to these homotopy groups.  We draw the $E_2$-page below, with each circle representing a single copy of $\mathbb{F}_2$:

\begin{remark}
The $E_2$-page is easy to compute
if one is only interested in integer stems.
In this case, even on the $E_1$-page,
the only possible contributions come from
$\mathrm{Ext}_{\mathbb{F}_2[x]}(\mathbb{F}_2[a,u])$
so
we may forego inverting $u$ and the corresponding completion.
\end{remark}

\begin{figure}[ht]
  \centering
	$E_2$ page of the spectral sequence in degrees contributing to $\pi_{p+0\sigma} \mathrm{N}\mathbb{F}_2$ \\
   \includegraphics[trim={3.9cm 17cm 9cm 4cm}, clip, scale=1]{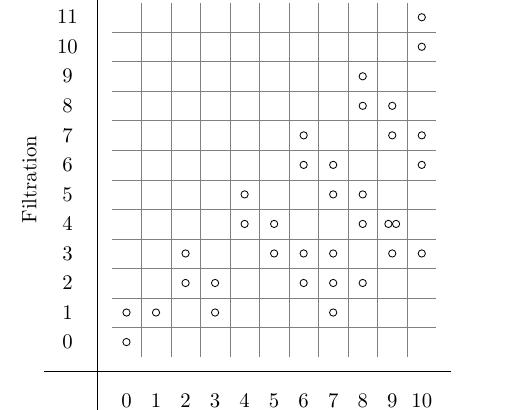}
\end{figure}

Using a low-dimensional cell structure
for $\mathrm{N}\mathbb{F}_2$, one can show that
there is a nontrivial extension between the classes in degrees $(0,0)$ and $(0,1)$, and in particular that $\pi_0 \mathrm{N}\mathbb{F}_2 \cong \mathbb{Z}/4\mathbb{Z}$.
One can also prove that the class in $(1,1)$ detects 
$\eta$.

We believe, but have not verified, that there is a $d_2$ differential from the class in degree $(5,3)$ to the class in degree $(4,5)$.  This should be
a consequence of
a whole family of $d_2$ differentials
	\[
	d_2(y_{i+1}) = (ay_0)y_i^2,
	\]
connected to each other via power operations.  We suspect that an equivariant analog of work of 
Kahn \cite{kahn}, as generalized by Bruner 
\cite[\S VI]{bmms}, could establish this family of differentials.

\subsection*{Further Questions}

\begin{question}
Can the spectral sequence in this paper be used to recover any of the exotic differentials
established by Hill-Shi-Wang-Xu in \cite{c4slice}?  While the $a_{\lambda}$-inverted slice spectral sequence also converges to the $RO(C_2)$-graded homotopy of $\mathrm{N}\mathbb{F}_2$, the two spectral sequences differ greatly on the $E_2$-page.
It is conceivable that they become much more similar after running the
slice differentials from Hill-Hopkins-Ravenel, since these implement
the $a$-torsion visible on our $E_2$-page.
\end{question}

\begin{question}
Can our method of proof be generalized to deduce the Segal conjecture for elementary $p$-groups?  The Segal conjecture for elementary $p$-groups is the key computational input for the Segal conjecture in general
\cite{AGM}.  For the group $C_p$, it seems that a study of $\underline{\mathbb{F}_p} \otimes_{N^{C_p}_{e} \mathbb{F}_p} \underline{\mathbb{F}_p}$ would be relevant.
\end{question}

\begin{question}
The groups $\mathrm{Ext}_{\mathbb{F}_2[x]}(\mathbb{F}_2[a,u])$
are much smaller than the version with $u$ inverted.  Real motivic homotopy theory provides a setting similar to $C_2$-equivariant homotopy theory in which the negative cone is not present (see, e.g., \cite{behrens-shah}).
Is there a notion of motivic topological Hochschild homology of $\mathbb{F}_2$
whose homotopy groups are the
Hopf algebroid $\mathbb{F}_2[a, u, x]$?
\end{question}

\bibliographystyle{amsalpha}
\nocite{*}
\bibliography{Bibliography}

\end{document}